\newtheorem{theorem}{Theorem}[section]
\newtheorem{lemma}[theorem]{Lemma}
\theoremstyle{definition}
\newtheorem{example}[theorem]{Example}
\newtheorem{remark}[theorem]{Remark}
\newtheorem{step}[theorem]{Step}
\theoremstyle{remark}
\def\tt{\ensuremath{\mathfrak{t}}}
\def\TT{\mathbb{T}}
\def\KK{\mathbb{K}}
\def\ZZ{\mathbb{Z}}
\def\PP{\mathbb{P}}
\def\QQ{\mathbb{Q}}
\def\OO{\mathcal O}
\def\<{\langle}
\def\>{\rangle}
\def\cox{\mathcal{R}}
\newcommand{\cone}[1]{\mathrm{cone}(#1)}
\newcommand{\thickhline}{%
    \noalign {\ifnum 0=`}\fi \hrule height 1pt
    \futurelet \reserved@a \@xhline
}
\renewcommand{\phi}{\varphi}
\def\div{{\rm div}}
\def\reg{{\rm reg}}
\def\rq#1{\widehat{#1}}
\def\b#1{\overline{#1}}
\def\KK{{\mathbb K}}
\def\TT{{\mathbb T}}
\def\ZZ{{\mathbb Z}}
\def\QQ{{\mathbb Q}}
\def\PP{{\mathbb P}}
\def\Cox{\cox}
\def\WDiv{\operatorname{WDiv}}
\def\Cl{\operatorname{Cl}}
\def\Pic{\operatorname{Pic}}
\def\cone{{\rm cone}}
\def\rank{\operatorname{rank}}
\def\ZZZ{\ZZ_{\geq 0}}
\def\KT#1{\KK[T_1,\ldots,T_{#1}]}
\def\tt#1{\texttt{#1}}
  \newcommand{\miniscule}{\@setfontsize\miniscule{3}{5}}% \tiny: 5/6
\author[U.~Derenthal, J.~Hausen, A.~Heim, S.~Keicher and A.~Laface]{Ulrich~Derenthal, J\"urgen~Hausen, Armand~Heim, \\ Simon~Keicher and Antonio~Laface}
 \address{Institut f\"ur Algebra, Zahlentheorie und Diskrete
  Mathematik, Leibniz Universit\"at Hannover,
  Welfengarten 1, 30167 Hannover, Germany}
\email{derenthal@math.uni-hannover.de}
 \address{Mathematisches Institut, Universit\"at T\"ubingen,
Auf der Morgenstelle 10, 72076 T\"ubingen, Germany}
\email{juergen.hausen@uni-tuebingen.de}
\address{Mathematisches Institut, Universit\"at T\"ubingen,
Auf der Morgenstelle 10, 72076 T\"ubingen, Germany}
\email{armand-martin.heim@student.uni-tuebingen.de}
\address{Mathematisches Institut, Universit\"at T\"ubingen,
Auf der Morgenstelle 10, 72076 T\"ubingen, Germany}
\email{simon.keicher@uni-tuebingen.de}
\address{Departamento de Matem\'atica, Universidad de Concepci\'on,
Casilla 160-C, Concepci\'on, Chile}
\email{alaface@udec.cl}
\title[Cox rings of cubic surfaces and Fano threefolds]{Cox rings of cubic surfaces \\ and Fano threefolds} 
\subjclass[2010]{
14L24, 14L30, 14C20, 14Q15\\
The first, second and fourth author were supported 
by the DFG Priority Program SPP 1489.
The fifth author was partially supported 
by Proyecto FONDECYT Regular N. 1110096.
The fourth and fifth author worked jointly on part of 
this manuscript in Concepci\'on 
as funded by the DAAD (project ID: 57055392)
and the Conicyt (project PCCI13005).
}
\begin{document}

\begin{abstract}
We determine the Cox rings of the minimal resolutions 
of cubic surfaces with at most rational double points, 
of blow-ups of the projective plane at non-general 
configurations of six points and of three dimensional 
smooth Fano varieties of Picard numbers one and two. 
\end{abstract}

\maketitle

\section{Introduction}

The homogeneous coordinate ring of a toric 
variety as introduced by D.~Cox~\cite{cox} 
quickly became an important tool in toric geometry.
Starting with the well known work~\cite{HuKe}
by Hu and Keel, this ring was also considered in 
a more general context: 
to any normal complete variety $X$ with finitely 
generated divisor class group $\Cl(X)$ one 
associates its {\em Cox ring\/}
$$ 
\mathcal{R}(X)
\ = \ 
\bigoplus_{\Cl(X)} \Gamma(X,\mathcal{O}_X(D)),
$$
where we refer to~\cite{ArDeHaLa} for the precise 
formulation of this definition and basic background.
The Cox ring is a rich invariant of $X$.
In case of finite generation, it gives even
rise to an explicit encoding of $X$, see~\cite{BeHa, Ha2}
Among other things, this opens a computational approach
to the geometry of $X$ once its Cox ring $\mathcal{R}(X)$
is known; see~\cite{HaKe}, for example.

The computation of Cox rings is an active field, see for
example~\cite{CaTe, Ca, LaVe, StuXu}. The aim of this
paper is to enhance the pool of known Cox rings for some classical
classes of varieties; we hope that these are particularly interesting
both in themselves and for arithmetic applications. We work over an
algebraically closed field $\KK$ of characteristic $0$.

One arithmetic motivation is that determining generators and relations
of the Cox rings of varieties gives an explicit description of their
universal torsors. These can be used to parameterize rational points
on varieties, leading for example to proofs of Manin's conjecture
\cite{FMT, BM} on the asymptotic behavior of the number of rational
points of bounded height on Fano varieties. In particular, this
approach was very successful for toric varieties \cite{Salberger}
(where the Cox rings are polynomial rings \cite{cox}), smooth quintic
del Pezzo surfaces \cite{dlB} (see \cite{Sk0} for the universal torsor
and \cite{BaPo} for the Cox ring) and many singular quartic del Pezzo
surfaces (based on the computation of Cox rings in \cite{DeThesis,Der,HaKeLa}).

Cubic surfaces are probably the rational surfaces that have received
the most attention. Smooth and singular cubic surfaces were classified
by Schl\"afli and Cayley in the 1860s. See \cite{Manin,Do} for a
modern account of their geometry. The Cox rings of smooth cubic
surfaces were determined in \cite{BaPo, StiTeVe}. For cubic surfaces
with rational double points as singularities, Cox rings were
determined so far only in the cases where there is at most one
relation in the Cox ring \cite{HaTs,Der}; this includes the toric
cubic surface of singularity type $3A_2$ and seven other types; for
several of them, Manin's conjecture is known, see the table in
Section~\ref{sec:surfaces}. In Section~\ref{sec:surfaces}, we complete
the list of the Cox rings for minimal desingularizations of such singular
cubic surfaces, see
Theorem~\ref{thm:cubic}. This should be a useful step in the further
investigation of Manin's conjecture for singular cubic surfaces via
the universal torsor method. Continuing~\cite{HaKeLa}, we also provide
the Cox rings of the blow-ups of $\PP_2$ in six distinct points in
non-general position.

While Cox rings of surfaces have been widely studied, higher
dimensional results are scarce. In dimension $3$, to our knowledge, we
only have explicit information in toric cases, for varieties
with a torus action of complexity $1$ \cite{HaHeSu} and for some
blow-ups of $\PP_3$ and $(\PP_1)^3$ \cite{CaTe, Ottem, StuXu, StuVe, HaKeLa,
  Baeker}. The three-dimensional analogs of del Pezzo surfaces are
Fano threefolds. Smooth Fano threefolds have been classified by
Iskovskikh~\cite{Is}, Mori and Mukai~\cite{MoMu}. We compute Cox rings
in the cases of Picard number one, see Theorem~\ref{thm:fano1}, and
two, see Theorem~\ref{thm:fano2}.  
This should provide the foundation of a systematic investigation of
Manin's conjecture in dimension $3$ via the universal torsor method;
see \cite{dlBSegre} for a successful application to Segre's
\emph{singular} cubic threefold.

In our computations we make use of the methods based on toric ambient
modifications developed in~\cite{Ha2,BaHaKe,HaKeLa}. In the case of
our singular cubic surfaces, this is relatively straightforward since
their minimal desingularizations are blow-ups of $\PP_2$ in six points
in \emph{almost general position}; a slight complication arises from
the fact that iterated blow-ups of points on exceptional divisors are
allowed. For the smooth Fano threefolds, the situation is much more
involved. In \cite{AG5}, each type is typically described as a double
cover of a complete intersection with prescribed branch curve or
branch divisor, or as the blow-up of a complete intersection in a
subvariety. For each type, we apply a suitable combination of existing
and new theoretical results (e.g., Lemma~\ref{lem:doublecover} on the
Cox rings of double covers of smooth Fano varieties) with
algorithmic methods. For the case of complete intersection Cox rings,
we determine the Cox rings for the whole family by applying our
algorithms formally, without the help of a computer. In other cases,
we compute the Cox rings for a typical
representative of the family.

We provide an implementation of algorithms from~\cite{HaKeLa} in the
computer algebra system \texttt{Singular}~\cite{singular}.  The
package is presented in Section~\ref{sec:example}. Its features
comprise verifying finite generation, verifying a guess of generators,
producing a guess of generators and computing relations between
generators for Cox rings.

We are grateful to the anonymous referees for valuable suggestions and remarks.
Moreover, we would like to thank Hendrik S\"u{\ss} for helpful comments.

\tableofcontents

\section{Computational preparations}
\label{sec:example}

We recall the necessary background on Cox rings,
the technique of toric ambient modifications
introduced in~\cite{Ha2} and the computational 
approach presented in~\cite{HaKeLa}.
Moreover, we perform example computations 
with our package \tt{compcox.lib}
indicating the use of its central functions;
for the full description, we refer to the 
manual available under~\cite{DeHaHeKeLa}.
The aim of~\tt{compcox.lib} is efficient
Cox ring computation. For a general package
for working with the geometry of Mori dream 
space we refer to~\cite{HaKe}.

First consider any normal complete variety $X$
with finitely generated divisor class group 
$\Cl(X)$. The Cox ring of $X$ is
$$ 
\mathcal{R}(X)
\ = \ 
\bigoplus_{\Cl(X)} \Gamma(X,\mathcal{O}_X(D)),
$$
where this definition is straightforward in the 
case of a torsion free divisor class group
and requires some care if torsion occurs;
see~\cite[Sec.~1.4.2]{ArDeHaLa}.
An important feature of the Cox ring is
its divisibility theory: $\mathcal{R}(X)$
is $\Cl(X)$-factorial in the sense that 
we have unique factorization in the 
multiplicative monoid of non-zero homogeneous 
elements, see~\cite[Sec.~1.5.3]{ArDeHaLa}.

If $\mathcal{R}(X)$ is finitely generated
and $X$ projective, then $X$ is called a
\emph{Mori dream space}.
In this setting, the choice of suitable $\Cl(X)$-homogeneous 
generators for the Cox ring gives rise to 
certain embeddings $X \subseteq Z$ into projective
toric varieties $Z$.
The idea is to work with such embeddings 
and to study the effect of a modification of the
ambient toric variety on the Cox ring of $X$.
More precisely, the toric variety $Z$ arises from
a fan $\Sigma$ in a lattice $N$. Recall
from~\cite{cox},
and also~\cite[Sections~2.1.2.--2.1.4]{ArDeHaLa},
that there are exact sequences
$$
\xymatrix{
0
\ar[r]
&
L
\ar[r]
&
{\ZZ^r}
\ar[r]^{P}
&
N,
&
\\
0
\ar@{<-}[r]
&
K
\ar@{<-}[r]_{Q}
&
{\ZZ^r}
\ar@{<-}[r]_{P^*}
&
M
\ar@{<-}[r]
&
0,
}
$$
where $r$ is the number of rays of $\Sigma$,
the linear map $P \colon \ZZ^r \to N$ sends the $i$-th
canonical basis vector to the primitive generator
of the $i$-th ray and $P^*$ denotes the dual map
of $P$.
The abelian group $K$ equals the divisor class
group $\Cl(Z)$.
The Cox ring of $Z$ is the polynomial ring 
$\KK[T_1,\ldots,T_r]$ with the $K$-grading 
assigning to the variable $T_i$ the image $Q(e_i)$
as its $K$-degree, where $e_i \in \ZZ^r$ 
denotes the $i$-th canonical basis vector.
Moreover, we obtain a fan
$$
\rq{\Sigma}
\ := \ 
\{
\rq{\sigma} \preceq \QQ^{r}_{\ge 0}; \;
P(\rq{\sigma}) \subseteq \tau \text{ for some } \tau \in \Sigma
\},
$$
where ``$\preceq$'' denotes the face relation and
we regard $P$ also as a map of the
corresponding rational vector spaces.
The fan $\rq{\Sigma}$ defines an open toric subvariety
$\rq{Z} \subseteq \KK^{r}$ and $P$ defines 
a toric morphism $p \colon \rq{Z} \to Z$.
Assume that $X \subseteq Z$ meets the big torus orbit 
$\TT \subseteq Z$, denote by $\b{X} \subseteq \KK^{r}$ 
the closure of $p^{-1}(X \cap \TT)$, 
by $I \subseteq \KK[T_1,\ldots,T_r]$
the vanishing ideal of $\b{X}$ and consider
the $K$-graded factor ring
$$
R \ := \ \KK[T_1,\ldots,T_r]/I.
$$
We call $X \subseteq Z$ a
\emph{compatibly embedded Mori dream space (CEMDS)}
if the $K$-graded ring $R$ is the Cox ring of $X$
and the variables $T_i$ define pairwise non-associated
$K$-prime elements in $R$.
We encode $X \subseteq Z$ by the triple 
$(P,\Sigma,G)$, where~$G \subseteq \KK[T_1,\ldots,T_r]$ 
is a system of $K$-homogeneous generators of the 
ideal~$I$.
We then also speak of~$(P,\Sigma,G)$ as a CEMDS.
If $G$ provides generators just for the vanishing 
ideal of $p^{-1}(X\cap \TT)$ in
$\KK[T_1^{\pm 1},\ldots,T_r^{\pm 1}]$, then we speak 
of a {\em weak CEMDS\/}.
Note that $X \subseteq Z$ and the Cox ring $R$
of $X$ can be directly reconstructed from $(P,\Sigma,G)$.
Let us see how to declare these data in our package.

\begin{example}[\tt{createCEMDS}]
\label{ex:createCEMDS0}
The $E_6$-singular cubic surface 
$X = V(z_1z_2^2+z_2z_0^2+z_3^2) \subseteq \PP_3$ 
has divisor class group $\Cl(X)=\ZZ$ and its Cox ring is 
given in terms of generators and relations by
\[
R
\ =\ 
\KT{4}/\<T_1^3T_2 + T_3^3 + T_4^2\>
,\qquad
Q\,=\,
  \left[
  \mbox{\tiny $
  \begin{array}{rrrr}
  1 & 3 & 2 & 3
  \end{array}
$}
\right]
\]
where the $i$-th column of $Q$ is the $\Cl(X)$-degree of 
the variable $T_i$; see e.g.~\cite[Ex.~V.4.3.5]{ArDeHaLa}.
The corresponding matrix $P$ and the fan $\Sigma$ 
of an ambient toric variety $Z$ according to the 
above setting are given by
\begin{center}
 \begin{minipage}{5cm}
  \[
P\,=\,
\left[
\mbox{\tiny 
$\begin{array}{rrrr}
 -3 & -1 & 3 & 0\\
 -3 & -1 & 0 & 2\\
 -2 & -1 & 1 & 1
 \end{array}
$
}
\right],
  \]
 \end{minipage}
 \ 
 \begin{minipage}{6.5cm}
 \raisebox{4.7ex}{$\Sigma\, :=\, \mbox{\footnotesize ${\rm fan}(\sigma^+,\sigma^-,\tau)\ $}$}
 \tiny
 \tdplotsetmaincoords{75}{-310}
\begin{tikzpicture}[tdplot_main_coords, scale=.9]

\draw[->] (0,0,0) -- (1,0,0);
\draw[->] (0,0,0) -- (0,1,0);

% sig-
\fill[color=black!61] (-1,-1,-1) -- (1,0,1/3) -- (0,0,0) -- cycle;
\fill[color=black!50] (-1,-1,-1) -- (0,1,1/2) -- (0,0,0) -- cycle;
\fill[color=blue!30] (1,0,1/3) -- (0,1,1/2) -- (0,0,0) -- cycle;

% tau
\fill[color=black!73] (-1,-1,-2/3) -- (-1,-1,-1) -- (0,0,0) -- cycle;

% sig+
\fill[color=black!35] (-1,-1,-2/3) -- (1,0,1/3) -- (0,0,0) -- cycle;
\fill[color=black!50] (-1,-1,-2/3) -- (0,1,1/2) -- (0,0,0) -- cycle;
\fill[color=black!20] (1,0,1/3) -- (0,1,1/2) -- (0,0,0) -- cycle;

\draw[->] (0,0,0) -- (0,0,.7);

\draw (-1,-1,-2/3) node[anchor=south]{$v_1$};
\draw (-1,-1,-1) node[anchor=north]{$v_2$};
\draw (1,0,1/3) node[anchor=north west]{$v_3$};
\draw (0,1,1/2) node[anchor=west]{$v_4$};
  \end{tikzpicture}
 \end{minipage}
 \end{center}
where $\sigma^+:=\cone(v_1,v_3,v_4)$, $\sigma^-:=\cone(v_2,v_3,v_4)$ 
and $\tau:=\cone(v_1,v_2)$ in terms of the columns $v_1,v_2,v_3,v_4$ 
of~$P$.
Then $X \subseteq Z$ is a CEMDS encoded by
$(P,\Sigma,G)$, where $G=(T_1^3T_2+T_3^3+T_4^2)$.
These data are entered as follows in a \tt{Singular}
session. 
First we activate the \tt{compcox.lib} package:
\\[1ex]
\begingroup
\noindent
\footnotesize
\tt{> LIB \char`\" compcox.lib\char`\";}\\[1ex]
\endgroup
\noindent
Next the integral $3 \times 4$ matrix $P$ is defined.
\\[1ex]
\begingroup
\noindent
\footnotesize
\tt{> intmat P[3][4] = }\\
\tt{>\ \ -3, -1, 3, 0,}\\
\tt{>\ \ -3, -1, 0, 2,}\\
\tt{>\ \ -2, -1, 1, 1;}\\[1ex]
\endgroup
\noindent
To define the fan $\Sigma$, we first
define its maximal cones.
For example, $\sigma^+$:
\\[1ex]
\begingroup
\noindent
\footnotesize
\tt{> intmat mplus[3][3] = }\\
\tt{> 	-3,-3,-2,}\\
\tt{> 	3,0,1,}\\
\tt{> 	0,2,1;}\\
\tt{> cone sigplus = coneViaPoints(mplus);}\\[1ex]
\endgroup
\noindent
Proceeding similarly for the other cones 
$\tau$, $\sigma^-$, we can enter $\Sigma$ as\\[1ex] 
\begingroup
\noindent
\footnotesize
\tt{> fan Sigma = fanViaCones(sigplus, sigminus, tau);}\\[1ex]
\endgroup
\noindent
Finally, we enter the Cox ring of~$X$
and create the CEMDS~$X$ encoded by $(P,\Sigma, G)$:
\\[1ex]
\begingroup
\noindent
\footnotesize
\tt{> ring S = 0,T(1..4),dp;}\\
\tt{> ideal G = T(1)\textasciicircum 3*T(2) + T(3)\textasciicircum 3 + T(4)\textasciicircum 2;}\\
\tt{> CEMDS X = createCEMDS(P, Sigma, G);}\\[1ex]
\endgroup
We can print information on $X$ with the command\\[1ex]
\begingroup
\footnotesize
\tt{X;}\\
\tt{The CEMDS's ring:}\\
\tt{//   characteristic : 0}\\
\tt{//   number of vars : 4}\\
\tt{//        block   1 : ordering dp}\\
\tt{//                  : names    T(1) T(2) T(3) T(4)}\\
\tt{//        block   2 : ordering C}\\
\tt{The column matrix P of the CEMDS's fan's rays:}\\
\tt{    -3    -1     3     0}\\
\tt{    -3    -1     0     2}\\
\tt{    -2    -1     1     1}\\
\tt{The CEMDS's fan via its maximal cones, each one denoted by a column matrix of its rays:}\\
\tt{Dimension 2:}\\
\tt{1st maximal cone:}\\
\tt{-3, -1,}\\
\tt{-3, -1,}\\
\tt{-2, -1}\\
\tt{Dimension 3:}\\
\tt{1st maximal cone:}\\
\tt{-3, 3, 0,}\\
\tt{-3, 0, 2,}\\
\tt{-2, 1, 1}\\
\tt{2nd maximal cone:}\\
\tt{3, 0, -1,}\\
\tt{0, 2, -1,}\\
\tt{1, 1, -1}\\
\tt{The equations' ideal G embedding the MDS into its ambient toric variety:}\\
\tt{T(1)\textasciicircum 3*T(2)+T(3)\textasciicircum 3+T(4)\textasciicircum 2}\\[1ex]
\endgroup
We can extract the matrix $P$, the fan $\Sigma$, the polynomial ring $S$ and 
the ideal $G$ of the CEMDS $X$ as follows:\\[1ex]
\begingroup
\noindent
\footnotesize
\tt{> intmat PTmp =  X.rvcvzP; // the matrix P}\\
\tt{> fan SigmaTmp =  X.scnSigma; // the fan Sigma}\\
\tt{> def STmp =  X.R; setring RTmp; // the polynomial ring}\\
\tt{> ideal GTmp =  X.spG; // the ideal G of the Cox ring}
\endgroup
\noindent
\end{example}

The \tt{compcox.lib} package then provides tools 
for working with modifications of compatibly 
embedded Mori dream spaces.
Its core procedures implement 
Algorithms~3.1, 3.2, 3.5, 3.6 and 5.4
of~\cite{HaKeLa}; here is an overview:
\begin{itemize}
\item 
\tt{stretchCEMDS:} 
changes the presentation of a given 
(weak) CEMDS by adding generators of 
the Cox ring.
\item 
\tt{compressCEMDS:} 
changes the presentation of a given 
(weak) CEMDS by removing redundant 
generators of the Cox 
ring.
\item 
\tt{contractCEMDS: }
computes the new CEMDS arising from 
a given (weak) one by contracting divisors.
\item 
\tt{modifyCEMDS:} 
computes the new CEMDS arising from 
a given (weak) CEMDS by a toric ambient 
modification.
\item 
\tt{blowupCEMDS:} 
computes the new CEMDS arising from 
a given one by blowing-up a subvariety 
contained in the smooth locus.
\item 
\tt{blowupCEMDSpoints:} 
computes the new CEMDS arising from 
a given one by blowing-up a list of smooth 
points.
\end{itemize}
The last three procedures treat similar problems,
where \tt{modifyCEMDS} applies most generally and 
\tt{blowupCEMDSpoints} is the most special one.
Our reason for implementing three related 
procedures is that we can reduce  considerably
in the more special settings the computational 
complexity of the necessary verification steps.

We now present the use of these procedures and 
provide some background on each.
We do this by going through the 
Cox ring computation steps for the cubic surface $X$
with singularity type $A_4$.
In $\PP_3$, this surface is given as
\[
X
\ =\ 
V(x_0x_1x_3 - x_1^2x_2 + x_2^2x_3 - x_2x_3^2)
\ \subseteq\ \PP_3.
\]
The minimal resolution $X'$ of $X$ 
can be obtained as the blow-up of $\PP_2$ 
in the three toric fixed points 
$[1,0,0]$, $[0,1,0]$, $[0,0,1]$
plus the blow-up of the intersection point of the second 
exceptional divisor with the strict transform of $V(\PP_2;\,x_2)$.
This is followed by the blow-up of a general point in the last 
exceptional divisor and, finally, a blow-up of the preimage of 
$[0,1,1]\in \PP_2$.

The four toric blow-ups of $\PP_2$ are easily 
performed in terms of fans.
The result is the complete toric surface $X_0$ 
with fan $\Sigma_0$
whose rays are generated by the columns of the matrix~$P_0$:
 \begin{center}
 \begin{minipage}{4cm}
\[
P_0
\ :=\ 
  \left[
  \mbox{\tiny $
  \begin{array}{rrrrrrr}
  -1 & 1 & 0 & 1 & -1 &  0 & -1 \\
  -1 & 0 & 1 & 1 &  0 & -1 &  1
  \end{array}
  $}
  \right],
  \]
 \end{minipage}
 \qquad\qquad\qquad\qquad
 \begin{minipage}{3cm}
 \raisebox{1.5ex}{$\Sigma_0 \ := \ \ $}
 \begin{tikzpicture}[scale=.4]
 \fill[color=black!30] (1,0) -- (1,1) -- (0,1) -- (-1,1) -- (-1,-1) -- (0,-1) -- (1,0) -- cycle;
 
 \draw[thick] (0,0) -- (1,0); 
 \draw[thick] (0,0) -- (0,1);
 \draw[thick] (0,0) -- (-1,-1);
 \draw[thick] (0,0) -- (1,1);
  \draw[thick] (0,0) -- (0,-1);
 \draw[thick] (0,0) -- (-1,0);
  \draw[thick] (0,0) -- (-1,1); 
 \end{tikzpicture}
 \end{minipage}
 \end{center}

\begin{step}[\tt{createCEMDS}]
\label{step:createCEMDS}
Following the lines of Example~\ref{ex:createCEMDS0},
we enter the CEMDS $X_0 = (P_0,\Sigma_0,G_0)$
in \tt{Singular} where $G_0 = (0)$.
\end{step}

We now perform the fifth blow-up and thereby 
demonstrate the use of the function \tt{modifyCEMDS}.
A preparatory step is to
present $X_0$ as a CEMDS $X_1 \subseteq Z_1$ 
such that the point to be blown up is suitably cut 
out by toric divisors.

\begin{step}[\tt{stretchCEMDS} and \tt{compressCEMDS}]
\label{step:stretchCEMDS}
Consider the general point $x_0 \in X_0$ 
in the last exceptional divisor with 
Cox coordinates 
$z_0 = (1,1,1,1,1,1,0) \in p_0^{-1}(x_0) \subseteq  \KK^7$.
The~\tt{compcox.lib} package computes equations 
for the closure of the fiber $p_0^{-1}(x_0)$:\\[1ex]
\begingroup
\footnotesize
\tt{> vector z0 = [1,1,1,1,1,1,0];}\\
\tt{> ideal FL = varproductOrbitClosureIdeal(P0, z0);}\\
\tt{FL;}\\
\tt{T(2)*T(3)*T(4)\textasciicircum 2-T(1)\textasciicircum 2*T(5)*T(6),}\\
\tt{T(7)}\\[1ex]
\endgroup
These polynomials define divisors on $X$ having 
$x_0$ as their intersection point.
A toric embedding such that $x_0$ is cut out by 
toric divisors is obtained via \tt{stretchCEMDS};
we add a new variable representing the first 
equation of \tt{FL}:
 \\[1ex]
\begingroup
\footnotesize
\tt{> CEMDS X1 = stretchCEMDS(X0, list(FL[1]), list(), 1); // 1 means compute the fan}\\
\tt{> def S1 = X1.R;}\\
\tt{> setring S1;}\\
\tt{> X1.spG; // print the ideal of the Cox ring}\\
\tt{-T(2)*T(3)*T(4)\textasciicircum 2+T(1)\textasciicircum 2*T(5)*T(6)+T(8)}
\\[1ex]
\endgroup
The result is a CEMDS $X_1 \subseteq Z_1$ with $X_0 \cong X_1$ 
having the desired property.
Note that $R_1$ is isomorphic to $R_0$ as a graded ring.
To get rid of redundant generators (which occur frequently 
during computations), one can use \tt{compressCEMDS}:
\\[1ex]
\begingroup
\footnotesize
\tt{> CEMDS X0again = compressCEMDS(X1, 0, list(), 1); // this is X0}
\endgroup
\end{step}

Before presenting \tt{modifyCEMDS}, 
let us indicate the theoretical background.
For $i = 1,2$, let $Z_i$ be projective toric varieties,
$X_i \subseteq Z_i$ be closed subvarieties both
intersecting the big torus orbits $\TT_i \subseteq Z_i$
and let $\pi \colon Z_2 \to Z_1$ be a birational
toric morphism such that $\pi(X_2) = X_1$
holds. Then, following the notation introduced
before, we have a commutative diagram
$$
\xymatrix{
{\KK^{r_2}}
\ar@{}[r]|\supseteq
&
{\b{X}_2}
&
{\b{X}_1}
\ar@{}[r]|\subseteq
&
{\KK^{r_1}}
\\
{\rq{Z}_2}
\ar@{}[r]|\supseteq
\ar@{}[u]|{\rotatebox[origin=c]{90}{$\scriptstyle \subseteq$}}
\ar[d]_{p_2}
&
{\rq{X}_2}
\ar@{}[u]|{\rotatebox[origin=c]{90}{$\scriptstyle \subseteq$}}
\ar[d]_{p_2}
&
{\rq{X}_1}
\ar@{}[u]|{\rotatebox[origin=c]{90}{$\scriptstyle \subseteq$}}
\ar[d]^{p_1}
\ar@{}[r]|\subseteq
&
{\rq{Z}_1}
\ar@{}[u]|{\rotatebox[origin=c]{90}{$\scriptstyle \subseteq$}}
\ar[d]^{p_1}
\\
Z_2
\ar@{}[r]|\supseteq
\ar@/_2pc/[rrr]_{\pi}
&
X_2
\ar[r]
&
X_1
\ar@{}[r]|\subseteq
&
Z_1
\\
&&&
}
$$
Let $\rq{X}_i \subseteq \rq{Z}_i$ be the
closure of $p_i^{-1}(X \cap \TT_i)$ and
$I_i \subseteq \KK[T_1,\ldots,T_{r_i}]$
the vanishing ideal of the closure
$\b{X}_i \subseteq \KK^{r_i}$
of $\rq{X}_i \subseteq \rq{Z}_i$.
Set $R_i := \KK[T_1,\ldots,T_{r_i}] / I_i$.
Note that $R_i$ is graded by
$K_i = \Cl(Z_i)$.
We have the following statements,
see~\cite[Thm.~2.6]{HaKeLa}.
\begin{itemize}
\item
If $X_1 \subseteq Z_1$ is a CEMDS, the ring $R_2$
is normal and 
$T_{1}, \ldots, T_{r_2}$ define pairwise
non-associated $K_2$-primes in~$R_2$,
then $X_2 \subseteq Z_2$ is a CEMDS. 
In particular, $K_2$ is the divisor class group
of $X_2$ and $R_2$ is the Cox ring of~$X_2$.
\item
If $X_2 \subseteq Z_2$ is a CEMDS,
then $X_1 \subseteq Z_1$ is a CEMDS.
In particular, $K_1$ is the divisor class group
of $X_1$ and $R_1$ is the Cox ring of~$X_1$.
\end{itemize}

\begin{step}[\tt{modifyCEMDS}]
\label{step:modifyCEMDS}
We continue Step~\ref{step:stretchCEMDS}.
First, read out the needed data of the weak CEMDS
$X_1 \subseteq Z_1$ encoded by $(P_1,\Sigma_1,G_1)$:
\\[1ex]
\begingroup
\noindent
\footnotesize
\tt{> intmat P1 =  X1.rvcvzP;}\\
\tt{> fan Sigma1 =  X1.scnSigma;}\\[1ex]
\endgroup
\noindent
Our task is to blow up the point $x_1\in X_1$
with Cox coordinates $z_1=(z_0,0)\in \KK^{8}$
by means of a toric ambient modification.
For this, we perform the stellar subdivision 
$\Sigma_2 \to \Sigma_1$
at $v:=p_7 + p_{8}$ 
with the columns $p_i$ of~$P_1$:\\[1ex]
\begingroup
\noindent
\footnotesize
\tt{> intvec v = intmatTakeCol(P1, 7) + intmatTakeCol(P1, 8);}\\
\tt{> fan Sigma2 = stellarSubdivision(Sigma1, v);}\\
\tt{> intmat P2 = intmatAppendCol(P1, v);}\\[1ex]
\endgroup
\noindent
The next command computes the proper transform $X_2 \subseteq Z_2$ 
under the toric ambient modification $Z_2 \to Z_1$ given by 
the stellar subdivision $\Sigma_2 \to \Sigma_1$.
\\[1ex]
\begingroup
\noindent
\footnotesize
\tt{> list L = modifyCEMDS(X1, list(P2, Sigma2), 1);}\\
\tt{> CEMDS X2 = L[1];}\\
\tt{> L[2];}\\
\tt{1}\\[1ex]
\endgroup
\noindent
The parameter ``1'' in the call to \tt{modifyCEMDS} advises
the algorithm to verify whether the result is a CEMDS
using the criterion presented before.
The result of this verification step is stored in \tt{L[2]}:
the entry $1$ tells us that $X_2$, encoded by $(P_2,\Sigma_2,G_2)$,
is a CEMDS. 
Let us print $G_2$.\\[1ex]
\begingroup
\noindent
\footnotesize
\tt{> ring S2 = X2.R;}\\
\tt{> setring S2;}\\
\tt{> X2.spG;}\\
\tt{T(2)*T(3)*T(4)\textasciicircum 2-T(1)\textasciicircum 2*T(5)*T(6)-T(8)*T(9)}\\[1ex]
\endgroup
\noindent
Note that it still remains to show that 
$X_2$  is indeed the blow-up of $X_1$ in the point~$x_1$.
In this case, this follows from smoothness of~$X_2$.
\end{step}

For the sixth blow-up step, we use the function 
\tt{blowupCEMDS}. It applies to the more special case of a blow-up
$\pi\colon X_2\to X_1$ of a CEMDS $X_1$ in a subvariety $C\subseteq X_1$ that
is contained in the smooth locus $X_1^\reg$.

Let us recall from~\cite[Algorithm~5.4]{HaKeLa} the background 
of the algorithm.
By the hypothesis $C\subseteq X_1^\reg$,
the map $\pi$ induces a well defined 
pull back map at the level of divisor class groups 
$\pi^*\colon {\rm Cl}(X_1)\to {\rm Cl}(X_2)$. If we denote
by $R_1=\mathcal R(X_1)$ the Cox ring of $X_1$, by
$I$ the ideal $I(p^{-1}(C))$ and by $J = I(\overline X_1
\setminus\widehat X_1)$ the irrelevant ideal,
then we can form the {\em saturated Rees algebra}:
\[
 R_1[I]^{\rm sat}
 \, :=\, 
 \bigoplus_{d\in\mathbb Z}(I^{d}\colon J^\infty)t^{-d},
\]
where we set $I^{d}\colon J^\infty = R$ if $d\leq 0$.
Note that $R_1[I]^{\rm sat}$ is canonically graded by 
${\rm Cl}(X_2) = {\rm Cl}(X_1)\oplus \ZZ$.
Let $E$ be the exceptional divisor of the blow-up $\pi$
and let $f_E\in\mathcal R(X)_{[E]}$ be the canonical
section for $E$. Then the following is an 
isomorphism of ${\rm Cl}(X_2)$-graded algebras~\cite[Prop.~5.2]{HaKeLa}:
\[
 R_1[I]^{\rm sat}\to\mathcal R(X_2),
 \qquad
 g\cdot t^{-d}\mapsto \pi^*g\cdot f_E^{-d}.
\]
Given ${\rm Cl}(X_1)$-prime generators $f_1,\ldots,f_l$ for $I$
and coprime integers $d_1,\ldots,d_l\in \ZZZ$,
the algorithm forms the  ${\rm Cl}(X_2)$-graded algebra $A$
generated by $f_1,\ldots,f_l$.
We have 
\[
 R_1[I]\ :=\ 
  \bigoplus_{d\in\mathbb Z}I^{d}t^{-d}
 \ \subseteq\ A\ \subseteq\ 
 R_1[I]^{\rm sat}.
\]
Then we test if the second inclusion is an equality:
If $T^\nu$ is the product over all $T_i$ 
with $C\not\subseteq V(X_1;\,T_i)$,
the equality $A=R_1[I]^{\rm sat}$ holds if and only
if the following inequality holds~\cite[Algorithm~5.4]{HaKeLa}
$$
\dim(I_2 +\langle T_{r_2}\rangle)\ 
>\ 
\dim(I_2 + \langle T_{r_2},T^\nu\rangle)
$$
where we present $A = \KT{r_2}/I_2$ 
and $T_{r_2}$ corresponds to the variable $t$.
Then $A = \KT{r_2}/I_2$ is the Cox ring of the blow-up $X_2\to X_1$
along~$C$.
 \label{eq:multiplic}
 Also note that, geometrically, $d_i$ is the 
multiplicity of the prime divisor defined by $f_i$
at the generic point of $C$.

We now continue Step~\ref{step:modifyCEMDS}, i.e.,
the computation of the cubic surface with singularity type $A_4$,
by applying the procedure \tt{blowupCEMDS}.

\begin{step}[\tt{blowupCEMDS}]
\label{step:blowupCEMDS}
Consider the CEMDS 
$X_2 \subseteq Z_2$ given by $(P_2,\Sigma_2,G_2)$
as computed in Step~\ref{step:modifyCEMDS}. 
We blow up $X_2$ in a point $x_2$ with Cox coordinates 
$z_2 \in \b{X}_2 \subseteq \KK^9$ that projects
under the previous sequence of blowing-up and embedding
$$
X_2
\,\longrightarrow\,
X_1
\,\longleftarrow\, 
X_0
\,\longrightarrow\, 
\PP_2
$$ 
to the point $[0,1,1]\in \PP_2$.
We choose the point $x_2\in X_2$ with Cox coordinates
$z_2=(0,1,\ldots,1)\in \KK^{9}$.
In the \texttt{Singular} session, we write\\[1ex]
\begingroup
\footnotesize
\tt{> def S2 = X2.R;}\\
\tt{> setring S2;}\\
\tt{> vector z2 = [0,1,1,1,1,1,1,1,1];}\\
\tt{> ideal FL = varproductOrbitClosureIdeal(P2, z2); // we defined P2 in~\ref{step:modifyCEMDS}}\\
\tt{> FL;}\\
\tt{-T(2)*T(3)*T(4)\textasciicircum 2+T(7)*T(9),}\\
\tt{-T(3)\textasciicircum 2*T(4)\textasciicircum 2*T(5)*T(8)\textasciicircum 2*T(9)+T(6)*T(7),}\\
\tt{-T(3)*T(5)*T(8)\textasciicircum 2*T(9)\textasciicircum 2+T(2)*T(6),}\\
\tt{T(1)}\\[1ex]
\endgroup
\noindent
Then \tt{FL} contains generators for the closure 
of the fiber $p_2^{-1}(x_2)$.
Using the first three generators, 
we blow-up $X_2$ in $x_2$ with the commands\\[1ex]
\begingroup
\footnotesize
\tt{> list L =  FL[1..4];}\\
\tt{> intvec d = 1,1,1,1;}\\
\tt{> CEMDS X3 = blowupCEMDS(X2, L, d, 1); }
\begin{center}
\tiny
The resulting embedded space was successfully verified to be a CEMDS.
\end{center}
\endgroup
\noindent
where the entries of \tt{d} are the multiplicities $d_i$
of the elements of \tt{L} and the parameter ``$1$'' 
tells the program to compute the fan of the ambient toric variety.
Then the CEMDS $X_3=(P_3,\Sigma_3,G_3)$ is the desired blow-up.
We can inspect the ideal $\<G_3\>$ of its Cox ring $R_3$ with\\[1ex]
\begingroup
\footnotesize
\tt{> def S3 = X3.R;}\\
\tt{> setring S3;}\\
\tt{> X3.spG;}\\
\tt{-T(4)*T(5)\textasciicircum 2*T(11)\textasciicircum 2*T(12)-T(2)*T(3)\textasciicircum 2*T(10)+T(8)*T(9),,}\\
\tt{-T(2)*T(4)\textasciicircum 2*T(5)*T(6)\textasciicircum 2*T(8)*T(11)\textasciicircum 2*T(12)+T(1)*T(9)-T(7)*T(10),,}\\
\tt{T(4)*T(5)*T(11)\textasciicircum 2*T(12)\textasciicircum 2-T(1)*T(2)*T(3)\textasciicircum 2+T(7)*T(8),,}\\
\tt{-T(2)\textasciicircum 2*T(3)\textasciicircum 2*T(4)*T(6)\textasciicircum 2*T(8)+T(5)*T(7)+T(9)*T(12),,}\\
\tt{-T(2)*T(4)*T(6)\textasciicircum 2*T(8)\textasciicircum 2+T(1)*T(5)+T(10)*T(12)}\\[1ex]
\endgroup
Then $R_3$ is the Cox ring of the resolution of the cubic surface;
it is listed  (up to isomorphism) in Theorem~\ref{thm:cubic}, case~$A_4$.
\end{step}

\begin{remark}[\tt{blowupCEMDSpoints}]
Blowing-up a series of smooth points on a variety
can also be accomplished using the function 
\tt{blowupCEMDSpoints} which automatically performs the steps
indicated in Step~\ref{step:blowupCEMDS}
and works if $d_i=1$ holds for all~$i$.
For instance, we could have replaced Step~\ref{step:stretchCEMDS}, \ref{step:modifyCEMDS} and~\ref{step:blowupCEMDS} by the following commands:\\[1ex]
\begingroup
\footnotesize
\tt{> vector z1 = [1,1,1,1,1,1,0];}\\ 
\tt{> vector z2 = [0,1,1,1,1,1,1];}\\
\tt{> CEMDS X3 = blowupCEMDSpoints(X1, list(z1, z2), 1, 1); }
\endgroup
\end{remark}

Finally, we turn to the Cox ring of the singular 
cubic surface $X$ of Step~\ref{step:createCEMDS}.
Note that $X$ is obtained from the resolution 
$X' = X_3$ by contracting the $(-2)$-curves.
In terms of Cox rings, the contraction of a 
prime divisor merely means to set the corresponding 
variable equal to one and to adapt the grading 
accordingly; this is basically the statement 
of~\cite[Prop.~2.2]{HaKeLa}.

\begin{step}[\tt{contractCEMDS}]
\label{ex:contractCEMDS}
Consider the Cox ring $R_3$ of the resolution $X' = X_3$
computed in Step~\ref{step:blowupCEMDS}.
An inspection of the sequence of blow-ups (and one embedding)  
$$
X_3
\,\longrightarrow\,
X_2
\,\longrightarrow\,
X_1
\,\longleftarrow\, 
X_0
\,\longrightarrow\, \PP_2
$$ 
shows that the $(-2)$-curves of $X_3$ correspond to 
the variables
$T_2,T_4,T_6,T_{11}$ in $R_3$ 
(note that we have to take into account
the substitutions 
$T_1 = T_{13}$ and $T_{10} = T_2T_3T_4^2-T_8T_9$ 
performed by \tt{blowupCEMDS} in
Step~\ref{step:blowupCEMDS}). 
We obtain the Cox ring of their contraction by\\[1ex]
\begingroup
\footnotesize
\tt{> intvec con11 = 1,1,1,1,1,1,1,1,1,1,0,1;}\\
\tt{> CEMDS X4 = contractCEMDS(X3, con11);}\\
\tt{> intvec con6 = 1,1,1,1,1,0,1,1,1,1,1;}\\
\tt{> CEMDS X5 = contractCEMDS(X4, con6);}\\
\tt{> intvec con4 = 1,1,1,0,1,1,1,1,1,1;}\\
\tt{> CEMDS X6 = contractCEMDS(X5, con4);}\\
\tt{> intvec con2 = 1,0,1,1,1,1,1,1,1;}\\
\tt{> CEMDS X7 = contractCEMDS(X6, con2);}\\[1ex]
\endgroup
Then the CEMDS $X_7=(P_7,\Sigma_7,G_7)$ is the singular cubic surface $X$
with singularity type $A_4$.
Its Cox ring is $\KT{8}/\<G_7\>$ where $G_7$ is\\[1ex]
\begingroup
\footnotesize
\tt{> def S7 = X7.R;}\\
\tt{> setring S7;}\\
\tt{> X7.spG;}\\
\tt{-T(2)\textasciicircum 2*T(7)-T(3)\textasciicircum 2*T(8)+T(5)*T(6),}\\
\tt{-T(3)*T(5)*T(8)+T(1)*T(6)-T(4)*T(7),}\\
\tt{-T(1)*T(2)\textasciicircum 2+T(3)*T(8)\textasciicircum 2+T(4)*T(5),}\\
\tt{-T(2)\textasciicircum 2*T(5)+T(3)*T(4)+T(6)*T(8),}\\
\tt{T(1)*T(3)-T(5)\textasciicircum 2+T(7)*T(8)}
\endgroup
\end{step}

\begin{example}[Parameters]\label{ex:parameters}
It is possible to compute with formal parameters in \tt{compcox.lib}:
they can appear both as coefficients in polynomials
and in Cox coordinates of points.
As an example, we compute the Cox ring of the 
blow-up of $\PP_2$ in five general points, i.e.,
the smooth del Pezzo surface $X$ of degree~$4$.
We first enter $\PP_2$ as a CEMDS $X_1=(P_1,\Sigma_1,(0))$;
the matrix $P_1$ is\\[1ex]
\begingroup
\footnotesize
\tt{> intmat P1[2][3] = }\\
\tt{> -1,1,0}\\
\tt{> -1,0,1;}\\[1ex]
\endgroup
The three maximal cones of the fan $\Sigma$ are entered as before:\\[1ex]
\begingroup
\footnotesize
\tt{> intmat m1[2][2] = }\\
\tt{> 1,0,}\\
\tt{> 0,1;}\\
\tt{> cone c1 = coneViaPoints(m1);}\\[1ex]
\endgroup
Enter the other two cones accordingly and define $\Sigma$ by\\[1ex]
\begingroup
\footnotesize
\tt{> fan Sigma1 = fanViaCones(c1, c2, c3);}\\[1ex]
\endgroup
Define the Cox ring $R_1$ of $X_1$ with two formal parameters $a,b$ 
and define $X_1$ by
\\[1ex]
\begingroup
\footnotesize
\tt{> ring S1 = (0,a,b),T(1..3),dp;}\\
\tt{> ideal G1 = 0;}\\
\tt{> CEMDS X1 = createCEMDS(P1, Sigma1, G1);}\\[1ex]
\endgroup
We now blow-up $X_1$ in five general points:
\\[1ex]
\begingroup
\footnotesize
\tt{> vector z1 = [1,0,0];}\\
\tt{> vector z2 = [0,1,0];}\\
\tt{> vector z3 = [0,0,1];}\\
\tt{> vector z4 = [1,1,1];}\\
\tt{> vector z5 = [1,a,b];}\\
\tt{> CEMDS X2 = blowupCEMDSpoints(X1, list(z1,z2,z3,z4,z5), 1);}
\endgroup
\begin{center}
\tiny
The resulting embedded space was successfully verified to be a CEMDS.
\end{center}
\noindent
The Cox ring of $X_2$ is $\KT{16}/\<G_2\>$ where the polynomials $G_2$ are\\[1ex]
\begingroup
\footnotesize
\tt{> def R2 = X2.R;}\\
\tt{> setring R2;}\\
\tt{> X2.spG;}\\
\tt{T(4)*T(13)+(-a)*T(5)*T(14)+(b)*T(6)*T(15),}\\
\tt{T(1)*T(13)-T(2)*T(14)+T(3)*T(15),}\\
\tt{T(10)*T(12)+(-a+1)*T(1)*T(13)+(-a+b)*T(3)*T(15),}\\
\tt{T(6)*T(12)-T(8)*T(13)+(a)*T(7)*T(14),}\\
\tt{T(5)*T(12)-T(9)*T(13)+(b)*T(7)*T(15),}\\
\tt{T(4)*T(12)+(-a)*T(9)*T(14)+(b)*T(8)*T(15),}\\
\tt{T(10)*T(11)+(-a+1)*T(5)*T(14)+(b-1)*T(6)*T(15),}\\
\tt{T(3)*T(11)-T(8)*T(13)+T(7)*T(14),}\\
\tt{T(2)*T(11)-T(9)*T(13)+T(7)*T(15),}\\
\tt{T(1)*T(11)-T(9)*T(14)+T(8)*T(15),}\\
\tt{(a-1)*T(5)*T(8)+(-b+1)*T(6)*T(9)-T(11)*T(16),}\\
\tt{(a*b-b)*T(2)*T(8)+(-a*b+a)*T(3)*T(9)-T(12)*T(16),}\\
\tt{T(4)*T(7)-T(5)*T(8)+T(6)*T(9),}\\
\tt{T(1)*T(7)-T(2)*T(8)+T(3)*T(9),}\\
\tt{(a-b)*T(2)*T(6)+(a)*T(7)*T(10)-T(13)*T(16),}\\
\tt{(b-1)*T(1)*T(6)-T(8)*T(10)+T(14)*T(16),}\\
\tt{T(3)*T(5)-T(2)*T(6)-T(7)*T(10),}\\
\tt{(a-1)*T(1)*T(5)-T(9)*T(10)+T(15)*T(16),}\\
\tt{T(3)*T(4)-T(1)*T(6)-T(8)*T(10),}\\
\tt{T(2)*T(4)-T(1)*T(5)-T(9)*T(10)}
\endgroup
\end{example}

\begin{remark}
\label{rem:nofan}
If one is merely interested in the Cox ring $\mathcal R(X_2)$ of 
a blow up $X_2\to X_1$ and not the CEMDS $X_2$, 
one can use \tt{blowupCEMDS} without 
specifying the ambient fan $\Sigma_1$ and turn of the computation
of $\Sigma_2$, see Example~\ref{ex:X19}.
\end{remark}

\section{Cubic surfaces and other blow-ups of the plane}
\label{sec:surfaces}

Here we compute Cox rings of rational surfaces. The first example
class arises from cubic surfaces $Y \subseteq \PP_3$ with at most
rational double points as singularities. Besides the smooth cubic
surfaces, there are precisely $20$ types of singular cubic surfaces
with rational double points that are distinguished by their
\emph{singularity types} (in the $ADE$-classification), as in 
Table~\ref{tab:cubic}. For each singularity type, there may be an infinite
family of isomorphy classes of cubic surfaces. For details on the
geometry and classification of singular cubic surfaces, see
\cite[Section~8 and 9]{Do}, for example.

We are interested in the Cox ring of a minimal desingularization $X$
of the singular cubic surface $Y$. Since $X$ arises by a sequence of
blow-ups from the projective plane $\PP_2$, we can apply our
algorithms. Note that the Cox ring of the singular surface $Y$ can be
obtained from the Cox rings of its desingularization $X$ by
\cite[Proposition~2.2]{HaKeLa}.

Here, we complete the computation of Cox rings of minimal
desingularizations of such cubic surfaces. The previously known cases
are the Cox rings of smooth cubic surfaces \cite[Theorem~9.1]{LaVe}, the
toric cubic surface of type $3A_2$ \cite{cox} and the seven types of
cubic surfaces whose Cox rings have precisely one relation
\cite{HaTs, Der, Hug}. We provide a precise description
of the Cox rings for the remaining $12$ types where there is more
than one relation in the Cox ring.

All results are summarized in Table~\ref{tab:cubic}. Note that Manin's
conjecture is known only for the toric type and five examples of the seven types of singular cubic surfaces where the Cox ring has precisely one relation. Hence our results should help to approach the other 12 types.

\begin{table}[ht]
  \centering
  \small\begin{tabular}{cccccccc}
      \hline
      no. & singularity type & $\#$lines & $\#$generators & $\#$relations & Manin's conjecture\\
      \hline
      0&$-$ & 27 & 27 & 81 & \\
      i&$A_1$ & 21 & 22 & 48 & \\
      ii&$2A_1$ & 16 & 18 & 27 & \\
      iii&$A_2$ & 15 & 17 & 21 & \\
      iv&$3A_1$ & 12 & 15 & 15 & \\
      v&$A_2A_1$ & 11 & 14 & 10 & \\
      vi&$A_3$ & 10 & 13 & 6 & \\
      vii&$4A_1$ & 9 & 13 & 9  & \text{bounds: \cite{MR2075628}} \\
      viii&$A_22A_1$ & 8 & 12 & 5 & \\
      ix&$A_3A_1$ & 7 & 11 & 2 & \\
      x&$2A_2$ & 7 & 11 & 2 & \\
      xi&$A_4$ & 6 & 12 & 5 & \\
      xii&$D_4$ & 6 &  10 & 1 & \text{\cite{arXiv:1207.2685}} \\
      xiii&$A_32A_1$ & 5 & 10 & 1 & \\
      xiv&$2A_2A_1$ & 5 & 10 & 1 & \text{\cite{arXiv:1105.3495}}\\
      xv&$A_4A_1$ & 4 & 10 & 1 & \\
      xvi&$A_5$ & 3 & 13 & 9 & & \\
      xvii&$D_5$ & 3 & 10 & 1 & \text{\cite{MR2520769}}\\
      xviii&$3A_2$ & 3 & 9 & -- & \text{\cite{MR1620682}, \dots, \cite{arXiv:1204.0383}}\\
      xix&$A_5A_1$ & 2 & 10 & 1 & \text{\cite{arXiv:1205.0373}}\\
      xx&$E_6$ & 1 & 10 & 1 & \text{\cite{MR2332351,arXiv:1311.2809}}\\
      \hline
  \end{tabular}
  \smallskip
  \caption{Cox rings of minimal desingularizations of cubic surfaces}
  \label{tab:cubic}
\end{table}

\begin{theorem}
\label{thm:cubic} 
Let $Y$ be a cubic surface whose singularities are rational double
points. Let $X$ be its minimal desingularization. Assume that the Cox
ring of $X$ has at least two relations.  In the following list sorted
by the singularity type of $Y$, we provide the Cox ring of $X$ by
specifying generators, their degrees and the ideal of relations.
\\[1ex]
(i) The blow-up of $\PP_2$ in $[1,0,0]$, $[0,1,0]$, $[0,0,1]$,
$[1,1,1]$, $[1,\lambda,0]$, and $[1,\mu,\kappa]$ with singularity type
$A_1$ has the $\ZZ^7$-graded Cox ring $\KT{22}/I$ with the following
generators for $I$ and degree matrix where $\lambda,\mu,\kappa\in
\KK^*\setminus \{1\}$ are such that $\mu\ne\lambda$ and
$\mu\ne\kappa$.
\begin{center}
\tiny
		$
		% [inline block 0: 31 envs, 26254 chars -> data_tex | \begin{array}{ll} 		(-\mu +\kappa )T_{3}T_{18}T_{21}+\kappa T_{13}T_{20}-T_{12}T_{22}, & ...]

    $}
    \right]
	$
}
\end{center}
\end{theorem}

\begin{proof}
  Let $Y$ be a cubic surface with at most rational double points as
  singularities. Its minimal desingularization $X$ is the blow-up
  $\pi\colon X \to \PP_2$ in six points in \emph{almost general position},
  i.e., the map $\pi$ is a composition
  \begin{equation*}
    X\,=\,X_6
    \ \xrightarrow{\pi_6}\
    X_5
    \ \xrightarrow{\pi_5}\ 
    \ \ldots\ 
    \xrightarrow{\pi_1}\ 
    X_0 \,=\, \PP_2
  \end{equation*}
  of six blow-ups $\pi_i\colon X_i \to X_{i-1}$, where the blown-up point
  $p_i \in X_{i-1}$ does not lie on a $(-2)$-curve on $X_{i-1}$; see
  \cite[Theorem~8.1.15]{Do}.

  Our starting point is the classification of such $X$ via the sub-root
  systems of the root system $R(E_6)$ of type $E_6$ in $\Pic(X)$; see
  \cite[Appendix, Theorem~3.12]{Manin} and the references before
  \cite[Appendix, Definition~3.6]{Manin}, for example. A basis for $\Pic(X)$
  is given by $\ell_0, \dots, \ell_6$, where $\ell_0$ is the class of
  $\pi^*\OO_{\PP_2}(1)$ and $\ell_i$ is the class of the total transform of
  the exceptional divisor of $\pi_i$;
  the anticanonical class is $-K_X = 3\ell_0-\ell_1-\dots-\ell_6$. Then
  \begin{equation*}
    R(E_6) = \{L \in \Pic(X) \mid L.L=-2,\ L.(-K_X)=0\}.
  \end{equation*}

  By \cite[p.~480, (iv)]{coxeter} (see also \cite[Table]{Urabe}), every
  sub-root system or $R(E_6)$ has one of the types listed in the second column
  of Table~\ref{tab:cubic}, and conversely, up to the action of the Weyl group
  associated to $R(E_6)$, there is a unique sub-root system of each type.
  Given a sub-root system $R \subset R(E_6)$ with simple roots $R_i$, there is
  an $X$ such that the classes of its $(-2)$-curves are precisely $R_i$ and
  the classes of its $(-1)$-curves are precisely
  \begin{equation*}
    \{L' \in \Pic(X) \mid L'.L'=-1,\ L.'(-K_X)=1,\ L'.L \ge 0\text{ for all }L \in R_i\}.
  \end{equation*}
  The configuration of negative curves on $X_6=X$ can be encoded in an
  \emph{extended Dynkin diagram}, where each vertex corresponds to a negative
  curve $L$, with $L.L'$ edges between vertices $L,L'$. Now we can produce the
  extended Dynkin diagrams of $X_5, \dots, X_0=\PP_2$ as follows: Given that
  of $X_i$, let $L$ be the class of a $(-1)$-curve to be contracted; delete
  the vertex $L$, increase the self-intersection numbers of the other vertices
  $L'$ by $(L.L')^2$ and add $(L.L')\cdot (L.L'')$ edges between the vertices
  $L'$ and $L''$. Finally, this leads to the extended Dynkin diagram of
  $X_0=\PP_2$, where we can read off the degrees of the images of the negative
  curves of $X$, and we can extract the configuration of the points $p_1,
  \dots, p_6$ that are blown up (as points in $\PP_2$ or as intersection
  points of exceptional divisors of $\pi_i$ with the strict transforms of
  curves in $\PP_2$). See \cite[Section~2.1]{Der} for more details.  The
  resulting classification can also be found in~\cite[Section~9.2.2]{Do}.

  Below, we describe the resulting configuration of blown-up points for each
  type whose Cox ring we want to determine.  For some sub-root systems $R
  \subset R(E_6)$, the corresponding surface $X$ is unique up to automorphism,
  and hence the configuration of blown-up points can be uniquely described up
  to isomorphism of $\PP_2$. For other $R$, there is a family of isomorphism
  classes of surfaces $X$ (see \cite{BW}), and some of the points involve
  parameters. Note that \tt{compcox.lib} can deal with parameters, see
  Remark~\ref{ex:parameters}.  The source code used to compute the listed Cox
  rings can be obtained from the homepage~\cite{DeHaHeKeLa} of our package.
  See also Section~\ref{sec:example} for the code used to obtain the Cox ring
  of the resolution of the singular cubic surface with singularity type
  $A_4$. The degree matrices are given in terms of our basis $\ell_0, \dots,
  \ell_6$ of $\Pic(X)$.

 Cases (i)--(v) with singularity types $A_1$, $2A_1$, $A_2$, $3A_1$,
 and $4A_1$ can be realized as blow-ups of six distinct points in
 $\PP_2$; the configurations of points are given in the statement of
 the theorem. Then the Cox rings are directly obtained by an
 application of the procedure \texttt{blowupCEMDSpoints}
 of our package \texttt{compcox.lib}. 
 %The degree matrices are
 % given in terms of a basis of $\Pic(X)$ chosen by our algorithms.

 For the remaining cases, at least one of the blown-up points lies on
 an exceptional divisor of a previous blow-up. In the following, we
 give a precise description of the blown-up points in each case. Since
 these do not give a concise description of the resulting surfaces, we
 provide instead a description of the anticanonical map $X \to Y
 \subset \PP_3$. This leads to a cubic form defining $Y$, as listed in
 the statement of the theorem. Again, we obtain the Cox rings from the
 sequence of blow-ups using \texttt{blowupCEMDSpoints} from
 \texttt{compcox.lib}.
% For these cases, the degrees of the generators
% of the Cox rings are given in terms of our basis $\ell_0, \dots,
%   \ell_6$ of $\Pic(X)$.
 We denote the coordinates on $\PP_2$ by $y_0,y_1,y_2$.

{\em Case $A_2A_1$:\/} The blown-up points are $p_1=[0,0,1]$, the
intersection $p_2$ of the first exceptional divisor with the strict
transform of $\{y_0-y_1=0\}$, and the preimages $p_3, \dots, p_6$ of
$[1,1,1]$, $[1,0,0]$, $[0,1,0]$, $[1,\lambda,0]$. 
The monomials
\begin{equation*}
(T_{3}T_{11}T_{12}T_{13},
   T_{1}T_{2}T_{3}T_{4}T_{7}T_{13},
   T_{1}T_{2}T_{3}T_{4}T_{6}T_{12},
   T_{1}T_{2}^2T_{6}T_{7}T_{8})
\end{equation*}
have degree $(3,-1,-1,-1,-1,-1,-1)$, which is
the anticanonical degree; in view of the relations in
  the Cox ring, they are linearly independent. Hence they form a basis of $H^0(X,-K_X)$. This
choice leads to the given cubic equation of the anticanonical image under
$X \to Y \subset \PP_3$.

{\em Case $A_3$:\/} The first blown-up point is $p_1=[1,1,0] \in
\PP_2$. The second blown-up point is the intersection of the first
exceptional divisor with the strict transform of $\{y_0-y_1+\lambda
y_2=0\}$. The further blown-up points $p_3, \dots, p_6$ are the
preimages of $[1,0,0]$, $[0,1,0]$, $[0,0,1]$, $[1,1,1]$ in $\PP_2$.
Choosing the basis 
\begin{equation*}
  (T_{9}T_{11}T_{12},T_{1}T_{2}T_{3}T_{6}T_{7}T_{12}
   ,T_{1}T_{2}T_{3}T_{5}T_{7}T_{10}
   ,T_{1}^2T_{2}^2T_{3}T_{4}T_{5}T_{6})
\end{equation*}
of $H^0(X,-K_X)$ leads to the given cubic equation of the
anticanonical image under $X \to Y \subset \PP_3$.

{\em Case $A_22A_1$:\/} The blown-up points are $p_1=[1,0,0]$, the
intersection $p_2$ of the first exceptional divisor with the strict
transform of $\{y_1=0\}$, the preimage $p_3$ of $[0,1,0]$, the
intersection $p_4$ of the third exceptional divisor with the strict
transform of $\{y_0=0\}$, and the preimages $p_5, p_6$ of $[1,0,-1]$,
$[0,1,-1]$. Choosing the basis 
\begin{equation*}
  (T_{3}T_{4}T_{5}T_{11}^2,
    T_{1}T_{2}^2T_{4}T_{7}^2T_{12},
    T_{1}^2T_{2}T_{3}T_{8}^2T_{9},
    T_{1}T_{2}T_{3}T_{4}T_{7}T_{8}T_{11})
\end{equation*}
of $H^0(X,-K_X)$ leads to the given cubic equation of the
anticanonical image under $X \to Y \subset \PP_3$.

{\em Case $A_3A_1$:\/} The blown-up points are $p_1=[0,1,0]$, the
intersection $p_2$ of the first exceptional divisor with the strict
transform of $\{y_0+y_2=0\}$, the preimage $p_3$ of $[0,0,1]$, the
intersection $p_4$ of the third exceptional divisor with the strict
transform of $\{y_0=0\}$, and the preimages $p_5, p_6$ of $[1,0,0]$,
$[1,-1,0]$. Choosing the basis 
\begin{equation*}
  (T_4T_5T_9T_{10}, T_1^2T_2^2T_3T_4T_7T_8^2, T_1T_2T_3T_4T_5T_8T_{11}, T_1T_2^2T_3^2T_6T_7T_{11})
\end{equation*}
of $H^0(X,-K_X)$ leads to the given cubic equation of the
anticanonical image under $X \to Y \subset \PP_3$.

{\em Case $2A_2$:\/} The blown-up points are $p_1=[1,0,0]$, the
intersection $p_2$ of the first exceptional divisor with the strict
transform of $\{y_1-\lambda y_2=0\}$, the intersection $p_3$ of the
second exceptional divisor with the strict transform of $\{y_1-\lambda
y_2=0\}$, and the preimages $p_4, p_5, p_6$ of $[0,1,0]$, $[0,0,1]$,
$[0,1,-1]$. Choosing the basis 
\begin{equation*}
  (T_{3}^2T_{4}T_{9}T_{10}T_{11},T_{1}^2T_{2}T_{6}T_{7}T_{8},
  T_{1}T_{2}T_{3}T_{4}T_{5}T_{7}T_{10},
  T_{1}T_{2}T_{3}T_{4}T_{5}T_{6}T_{9})
\end{equation*}
of $H^0(X,-K_X)$ leads to the given cubic equation of the
anticanonical image under $X \to Y \subset \PP_3$.

{\em Case $A_4$:\/} The blown-up points are $p_1=[0,1,0]$, the
intersection $p_2$ of the first exceptional divisor with the strict
transform of $\{y_2=0\}$, the intersection $p_3$ of the second
exceptional divisor with the strict transform of $\{y_0^2-y_1y_2=0\}$,
and the preimages $p_4, p_5, p_6$ of $[1,0,0]$, $[0,0,1]$,
$[0,1,1]$. Choosing the basis
\begin{equation*}
  (T_{9}T_{11},
    T_{1}T_{2}^2T_{3}^2T_{4}^2T_{5}T_{7}T_{8},
    T_{1}T_{2}T_{3}T_{4}T_{6}T_{7}T_{9},
    T_{1}^2T_{2}^3T_{3}^2T_{4}T_{5}^2T_{6})
\end{equation*}
of $H^0(X,-K_X)$ leads to the given cubic equation of the
anticanonical image under $X \to Y \subset \PP_3$.

{\em Case $A_5$:\/} The blown-up points are $p_1 = [0,1,-1]$,
the
intersection $p_2$ of the first exceptional divisor with the strict
transform of $\{y_1+y_2=0\}$, the intersection $p_3$ of the second
exceptional divisor with the strict transform of $\{y_1+y_2=0\}$, 
the
intersection $p_4$ of the third exceptional divisor with the strict
transform of $\{y_0^3-(y_1+y_2)y_2^2=0\}$, 
and the preimages $p_5,
p_6$ of $[0,0,1]$, $[0,1,0]$. Choosing the basis
\begin{equation*}
  (T_{13},
  T_{1}T_{2}^2T_{3}^2T_{4}^2T_{5}^2T_{6}T_{7}T_{8},
  T_{1}T_{2}T_{3}T_{4}T_{5}T_{6}T_{10},
  T_{1}T_{2}T_{3}T_{4}T_{5}T_{8}T_9)
\end{equation*}
of $H^0(X,-K_X)$ leads to the given cubic equation of the
anticanonical image under $X \to Y \subset \PP_3$.
\end{proof}

\begin{remark}
In Theorem~\ref{thm:cubic}, the Cox rings of the singular cubic
surfaces can be obtained by contracting the $(-2)$-curves
with the procedure \tt{contractCEMDS} of our library \tt{compcox.lib}.
As a concrete example, we have treated the $A_4$ case explicitly 
in Step~\ref{ex:contractCEMDS}.
\end{remark}

Now, we consider other rational surfaces arising from the 
projective plane by blowing up six distinct points. If the points are
in \emph{general position} (i.e., no three points lie on a common
line, and not all six lie on a common conic), then this leads to the
smooth cubic surfaces; a description of their Cox rings is given
in~\cite{LaVe}.  When the six distinct points are in almost general
position (i.e., at most three points lie on a common line), then we
obtain minimal resolutions of cubic surfaces with at most rational
double points; see Theorem~\ref{thm:cubic} for their Cox rings. The
following settles the remaining cases.

\begin{theorem}
  Let $X$ be the blow-up of $\PP_2$ at six distinct points
  $p_1,\ldots, p_6$ in non-general position.  For the possible types
  of configurations, indicated by the figures, the following table
  provides the Cox ring of $X$ by specifying generators $T_1,\ldots,
  T_r$, their degrees $w_1, \ldots, w_r \in \Cl(X) \cong \ZZ^7$ and
  generators of the ideal of relations.
The parameters $a,b,c$ are pairwise distinct elements of $\KK^*\setminus\{1\}$.

\begingroup
\tiny 
\begin{longtable}{ll}\hline 
configuration & degree matrix $[w_1, \ldots, w_r]$ and $\Cox(X)$ 
\\ \hline \\ 
\begin{minipage}{2cm}
\footnotesize
\begin{center}
\tiny (i)\\ %6p6col
\begin{tikzpicture}[scale=.2]
\coordinate (100) at (-2,0);
\coordinate (010) at (2,0);
\coordinate (001) at ($(100)!1.0!60:(010)$);
\coordinate (110) at ($(100)!.2!(010)$);
\coordinate (1a0) at ($(100)!.4!(010)$);
\coordinate (1b0) at ($(100)!.6!(010)$);
\coordinate (1c0) at ($(100)!.8!(010)$);
\filldraw[fill=black!25, draw=black] (100)--(010)--(001)--cycle;
\fill (100) circle (.20cm);
\fill (010) circle (.20cm);
\fill (110) circle (.20cm);
\fill (1a0) circle (.20cm);
\fill (1b0) circle (.20cm);
\fill (1c0) circle (.20cm);
\end{tikzpicture}\\[1ex]
\begingroup
\tiny 
$
\begin{array}{rcl}
p_1 &=& [1,0,0],\\
p_2 &=& [0,1,0],\\
p_3 &=& [1,1,0], \\
p_4 &=& [1,a,0],\\
p_5 &=& [1,b,0],\\
p_6 &=& [1,c,0].\\
\end{array}
$
\endgroup
\end{center}
\end{minipage}
&
\begin{minipage}{6cm}
\hspace*{1em}
\mbox{\tiny
$
\left[
\begin{array}{rrrrrrrrrrrrr}
1 & 0 & 0 & 0 & -1 & -1 & 1 & 0 & 0 & 0 & 0 & 0 & 0\\
1 & -1 & 0 & 1 & -1 & 0 & 0 & 0 & 0 & 0 & 0 & 0 & 0\\
-1 & 0 & 0 & 0 & 1 & 0 & 0 & 1 & -1 & 0 & 0 & 0 & 0\\
-1 & 0 & -1 & 0 & 1 & 0 & 0 & 0 & 0 & 0 & 0 & 0 & 0\\
-1 & 0 & 0 & 0 & 1 & 0 & 0 & 0 & 0 & 1 & -1 & 0 & 0\\
0 & 1 & 0 & 0 & 1 & 1 & 0 & 1 & 0 & 1 & 0 & 1 & 0\\
1 & 1 & 0 & 0 & 0 & 1 & 0 & 1 & 0 & 1 & 0 & 0 & 1
\end{array}
\right]
$
}
\vspace{0.15cm} \\
\hspace*{1cm}
\begin{minipage}{5cm}
\arraycolsep=2.2pt
$
\begin{array}{r}
(ab-ac)T_{8}T_{9}+(-a^2+ac)T_{10}T_{11}+(a-b)T_{12}T_{13},\\
(a-b)T_{6}T_{7}+(b-1)T_{8}T_{9}+(-a+1)T_{10}T_{11},\\
(a-b)T_{1}T_{5}+T_{8}T_{9}-T_{10}T_{11},\\
(a-b)T_{2}T_{4}+(b)T_{8}T_{9}-aT_{10}T_{11}
\end{array}
$
\end{minipage}
\end{minipage}
\\ \\ \hline \\ 
\begin{minipage}{1.6cm}
\footnotesize
\begin{center}
\tiny (ii) \\ 
\begin{tikzpicture}[scale=.2]
\coordinate (100) at (-2,0);
\coordinate (010) at (2,0);
\coordinate (001) at ($(100)!1.0!60:(010)$);
\coordinate (110) at ($(100)!.25!(010)$);
\coordinate (1a0) at ($(100)!.5!(010)$);
\coordinate (1b0) at ($(100)!.75!(010)$);
\filldraw[fill=black!25, draw=black] (100)--(010)--(001)--cycle;
\fill (100) circle (.20cm);
\fill (010) circle (.20cm);
\fill (001) circle (.20cm);
\fill (110) circle (.20cm);
\fill (1a0) circle (.20cm);
\fill (1b0) circle (.20cm);
\end{tikzpicture}\\[1ex]
\begingroup
\tiny 
$
\begin{array}{rcl}
p_1 &=& [1,0,0],\\
p_2 &=& [0,1,0],\\
p_3 &=& [0,0,1], \\
p_4 &=& [1,1,0],\\
p_5 &=& [1,a,0],\\
p_6 &=& [1,b,0].\\
\end{array}
$
\endgroup
\end{center}
\end{minipage}
&
\begin{minipage}{7cm}
\hspace*{1em}
\mbox{\tiny
$
\left[
\begin{array}{rrrrrrrrrrrrr}
0 & -1 & 0 & 1 & 0 & 1 & 0 & 0 & 0 & 0 & 0 & 0\\
0 & 1 & 1 & -1 & 0 & 0 & 0 & 0 & 0 & 0 & 0 & 0\\
0 & -1 & 0 & 1 & 0 & 0 & 1 & -1 & 0 & 0 & 0 & 0\\
1 & -1 & 0 & 1 & -1 & 0 & 0 & 0 & 0 & 0 & 0 & 0\\
0 & -1 & 0 & 1 & 0 & 0 & 0 & 0 & 1 & -1 & 0 & 0\\
1 & 0 & 0 & 1 & 0 & 0 & 1 & 0 & 1 & 0 & 1 & 0\\
1 & 1 & 0 & 0 & 0 & 0 & 1 & 0 & 1 & 0 & 0 & 1
\end{array}
\right]
$
}
\vspace{0.15cm} \\ 
\begin{minipage}{5cm}
\hspace*{1cm}
$
\begin{array}{r}
(a-b)T_{7}T_{8}+(b-1)T_{9}T_{10}+(-a+1)T_{11}T_{12},\\
(a-1)T_{1}T_{5}-T_{7}T_{8}+T_{9}T_{10},\\
(a-1)T_{2}T_{4}-aT_{7}T_{8}+T_{9}T_{10}
\end{array}
$
\end{minipage}
\end{minipage}
\\ \\ \hline \\ 
\begin{minipage}{1.6cm}
\footnotesize
\begin{center}
\tiny (iii) \\ 
\begin{tikzpicture}[scale=.2]
\coordinate (100) at (-2,0);
\coordinate (010) at (2,0);
\coordinate (001) at ($(100)!1.0!60:(010)$);
\coordinate (111) at ($(100)!.5!(010)!1/3!(001)$);
\coordinate (110) at ($(100)!0.5!(010)$);
\coordinate (1a0) at ($(100)!0.75!(010)$);
\filldraw[fill=black!25, draw=black] (100)--(010)--(001)--cycle;
\draw[draw=black, densely dotted] (001)--(110);
\fill (100) circle (.20cm); 
\fill (010) circle (.20cm);  
\fill (001) circle (.20cm);  
\fill (111) circle (.20cm); 
\fill (110) circle (.20cm); 
\fill (1a0) circle (.20cm); 
\end{tikzpicture}
\\[1ex]
\begingroup
\tiny 
$
\begin{array}{rcl}
p_1 &=& [1,0,0],\\
p_2 &=& [0,1,0],\\
p_3 &=& [0,0,1], \\
p_4 &=& [1,1,1],\\
p_5 &=& [1,1,0],\\
p_6 &=& [1,a,0].\\
\end{array}
$
\endgroup
\end{center}
\end{minipage}
&
\begin{minipage}{7cm}
\mbox{\tiny
$
\left[
\begin{array}{rrrrrrrrrrrrrr}
0 & 1 & 1 & -1 & 0 & 0 & 0 & 0 & 0 & 0 & 0 & 0 & 0 & 0\\
0 & 0 & -1 & 0 & 0 & 0 & -1 & 1 & -1 & 0 & 0 & 0 & 0 & 0\\
1 & 0 & 1 & 0 & -1 & 0 & 0 & 0 & 0 & 1 & 0 & 0 & 0 & 0\\
0 & -1 & 1 & 0 & 0 & 1 & 0 & 0 & 0 & 0 & 1 & 0 & 0 & 0\\
-1 & 0 & -1 & 0 & 1 & 1 & -1 & 1 & 0 & 0 & 0 & 1 & 0 & 0\\
1 & 1 & 0 & 0 & 0 & -1 & 1 & 0 & 0 & 0 & 0 & 0 & 1 & 0\\
0 & 1 & -2 & 0 & 1 & 0 & 0 & 1 & 0 & 0 & 0 & 0 & 0 & 1
\end{array}
\right]
$
}
\vspace{0.15cm} \\ 
\begin{minipage}{5cm}
$
\begin{array}{rl}
(a-1)T_{3}T_{4}T_{5}T_{8}+T_{11}T_{12}+T_{6}T_{13},&
aT_{3}T_{5}T_{7}T_{8}^2+T_{2}T_{12}-T_{9}T_{13},\\
T_{3}T_{4}T_{7}T_{8}^2+T_{1}T_{12}-T_{10}T_{13},&
T_{6}T_{7}T_{8}+(a-1)T_{5}T_{10}+T_{12}T_{14},\\
T_{4}T_{9}-aT_{5}T_{10}-T_{12}T_{14},&
T_{3}T_{7}T_{8}^2T_{14}+T_{1}T_{9}-T_{2}T_{10},\\
-T_{3}T_{5}T_{8}T_{14}+T_{2}T_{6}+T_{9}T_{11},&
-T_{3}T_{4}T_{8}T_{14}+T_{1}T_{6}+T_{10}T_{11},\\
-T_{7}T_{8}T_{11}+(a-1)T_{1}T_{5}+T_{13}T_{14},&
T_{2}T_{4}-aT_{1}T_{5}-T_{13}T_{14}
\end{array}
$
\end{minipage}
% }
\end{minipage}
\\ \\ \hline \\ 
\begin{minipage}{1.6cm}
\footnotesize
\begin{center}
\tiny (iv)\\
\begin{tikzpicture}[scale=.2]
\coordinate (100) at (-2,0);
\coordinate (010) at (2,0);
\coordinate (001) at ($(100)!1.0!60:(010)$);
\coordinate (111) at ($(100)!.5!(010)!1/3!(001)$);
\coordinate (110) at ($(100)!1/3!(010)$);
\coordinate (1a0) at ($(100)!2/3!(010)$);
\filldraw[fill=black!25, draw=black] (100)--(010)--(001)--cycle;
\fill (100) circle (.20cm);
\fill (010) circle (.20cm);
\fill (001) circle (.20cm);
\fill (111) circle (.20cm);
\fill (110) circle (.20cm);
\fill (1a0) circle (.20cm);
\end{tikzpicture}
\\[1ex]
\begingroup
\tiny 
$
\begin{array}{rcl}
p_1 &=& [1,0,0],\\
p_2 &=& [0,1,0],\\
p_3 &=& [0,0,1], \\
p_4 &=& [1,1,1],\\
p_5 &=& [1,a,0],\\
p_6 &=& [1,b,0].\\
\end{array}
$
\endgroup
\end{center}
\end{minipage}
&
\begin{minipage}{7cm}
\mbox{\tiny
$
\left[
\begin{array}{rrrrrrrrrrrrrrrr}
0 & -1 & 0 & 1 & 0 & 1 & -1 & 0 & -1 & 1 & 0 & 0 & 0 & 0 & 0 & 0\\
1 & -1 & 0 & 1 & -1 & 0 & -1 & 1 & 0 & 0 & 0 & 0 & 0 & 0 & 0 & 0\\
0 & 1 & 1 & -1 & 0 & 0 & 1 & 0 & 0 & 0 & 0 & 0 & 0 & 0 & 0 & 0\\
0 & -1 & 0 & 1 & 0 & 0 & -1 & 0 & 0 & 0 & 1 & 1 & -1 & 0 & 0 & 0\\
-1 & 0 & 0 & 0 & 1 & 1 & 1 & 0 & 0 & 0 & 1 & 0 & 0 & 1 & 0 & 0\\
1 & 1 & 0 & 0 & 0 & -1 & 0 & 0 & 1 & 0 & 0 & 1 & 0 & 0 & 1 & 0\\
0 & 2 & 0 & -1 & 1 & 0 & 2 & 0 & 1 & 0 & 1 & 1 & 0 & 0 & 0 & 1
\end{array}
\right]
$
}
\vspace{0.15cm} \\ 
\begin{minipage}{5cm}
$
\begin{array}{rl}
(-a^2+ab)T_{3}T_{4}T_{5}T_{9}+T_{12}T_{14}+aT_{11}T_{15},&
(a-1)T_{5}T_{8}+T_{6}T_{9}-T_{11}T_{13},\\
(ab-a)T_{3}T_{4}T_{5}T_{13}+T_{10}T_{14}+aT_{6}T_{15},&
(a-1)T_{4}T_{7}+aT_{6}T_{9}-T_{11}T_{13},\\
(ab)T_{3}T_{5}T_{9}T_{13}+T_{2}T_{14}-aT_{7}T_{15},&
T_{3}T_{9}T_{13}T_{16}+T_{1}T_{7}-T_{2}T_{8},\\
aT_{3}T_{4}T_{9}T_{13}+T_{1}T_{14}-aT_{8}T_{15},&
-T_{3}T_{5}T_{13}T_{16}+T_{2}T_{6}+T_{7}T_{10},\\
(-a+1)T_{3}T_{4}T_{5}T_{16}+T_{10}T_{11}-T_{6}T_{12},&
-T_{3}T_{4}T_{13}T_{16}+T_{1}T_{6}+T_{8}T_{10},\\
(-a)T_{3}T_{5}T_{9}T_{16}+T_{2}T_{11}+T_{7}T_{12},&
(a-1)T_{1}T_{5}-T_{9}T_{10}+T_{12}T_{13},\\
-T_{3}T_{4}T_{9}T_{16}+T_{1}T_{11}+T_{8}T_{12},&
T_{2}T_{4}-T_{1}T_{5}-T_{9}T_{10}
\end{array}
$
\\
\hspace*{2cm}
$
\begin{array}{r}
(a-b)T_{9}T_{10}+(b-1)T_{12}T_{13}+(-a+1)T_{15}T_{16},\\
(a^2-ab)T_{6}T_{9}+(ab-a)T_{11}T_{13}+(a-1)T_{14}T_{16},
\end{array}
$
\end{minipage}
\end{minipage}
\\ \\ \hline \\ 
\multicolumn{2}{l}{
\begin{minipage}{1.6cm}
\footnotesize
\begin{center}
\begin{tikzpicture}[scale=.2]
\coordinate (100) at (-2,0);
\coordinate (010) at (2,0);
\coordinate (001) at ($(100)!1.0!60:(010)$);
\coordinate (111) at ($(100)!.5!(010)!1/3!(001)$);
\coordinate (110) at ($(100)!.5!(010)$);
\coordinate (101) at ($(100)!.5!(001)$);
\filldraw[fill=black!25, draw=black] (100)--(010)--(001)--cycle;
\draw[draw=black, densely dotted] (001) -- (110);
\draw[draw=black, densely dotted] (101) -- (010);
\fill (100) circle (.20cm);
\fill (010) circle (.20cm);
\fill (001) circle (.20cm);
\fill (111) circle (.20cm);
\fill (110) circle (.20cm);
\fill (101) circle (.20cm);
\end{tikzpicture}\\
\tiny (v) 
\end{center}
\end{minipage}
\qquad\qquad
\begin{minipage}{1.6cm}
\footnotesize
\begin{center}
\begin{tikzpicture}[scale=.2]
\coordinate (100) at (-2,0);
\coordinate (010) at (2,0);
\coordinate (001) at ($(100)!1.0!60:(010)$);
\coordinate (110) at ($(100)!.5!(010)$);
\coordinate (101) at ($(100)!.5!(001)$);
\coordinate (011) at ($(010)!.5!(001)$);
\filldraw[fill=black!25, draw=black] (100)--(010)--(001)--cycle;
\fill (100) circle (.20cm);
\fill (010) circle (.20cm);
\fill (001) circle (.20cm);
\fill (110) circle (.20cm);
\fill (101) circle (.20cm);
\fill (011) circle (.20cm);
\end{tikzpicture}\\
\tiny (vi) 
\end{center}
\end{minipage}
\qquad\qquad 
\begin{minipage}{1.6cm}
\footnotesize
\begin{center}
\begin{tikzpicture}[scale=.2]
\coordinate (100) at (-2,0);
\coordinate (010) at (2,0);
\coordinate (001) at ($(100)!1.0!60:(010)$);
\coordinate (111) at ($(100)!.5!(010)!1/3!(001)$);
\coordinate (1a0) at ($(100)!2/3!(010)$);
\coordinate (10b) at ($(100)!1/3!(001)$);
\filldraw[fill=black!25, draw=black] (100)--(010)--(001)--cycle;
\fill (100) circle (.20cm);
\fill (010) circle (.20cm);
\fill (001) circle (.20cm);
\fill (111) circle (.20cm);
\fill (1a0) circle (.20cm);
\fill (10b) circle (.20cm);
\end{tikzpicture}\\
\tiny (vii) 
\end{center}
\end{minipage}
\qquad\qquad
\begin{minipage}{1.6cm}
\footnotesize
\begin{center}
\begin{tikzpicture}[scale=.2]
\coordinate (100) at (-2,0);
\coordinate (010) at (2,0);
\coordinate (001) at ($(100)!1.0!60:(010)$);
\coordinate (110) at ($(100)!.5!(010)$);
\coordinate (111) at ($(100)!.5!(010)!1/3!(001)$);
\coordinate (1a0) at ($(100)!2/3!(010)$);
\coordinate (11b) at ($(100)!.5!(010)!2/3!(001)$);
\filldraw[fill=black!25, draw=black] (100)--(010)--(001)--cycle;
\draw[draw=black, densely dotted] (001) -- (110);
\fill (100) circle (.20cm);
\fill (010) circle (.20cm);
\fill (001) circle (.20cm);
\fill (111) circle (.20cm);
\fill (1a0) circle (.20cm);
\fill (11b) circle (.20cm);
\end{tikzpicture}\\
\tiny (viii) 
\end{center}
\end{minipage}
\qquad\qquad
\begin{minipage}{1.6cm}
\footnotesize
\begin{center}
\begin{tikzpicture}[scale=.2]
\coordinate (100) at (-2,0);
\coordinate (010) at (2,0);
\coordinate (001) at ($(100)!1.0!60:(010)$);
\coordinate (111) at ($(100)!.5!(010)!1/3!(001)$);
\coordinate (1a0) at ($(100)!2/3!(010)$);
\coordinate (1bc) at ($(100)!2/3!(010)!2/3!(001)$);
\filldraw[fill=black!25, draw=black] (100)--(010)--(001)--cycle;
\fill (100) circle (.20cm);
\fill (010) circle (.20cm);
\fill (001) circle (.20cm);
\fill (111) circle (.20cm);
\fill (1a0) circle (.20cm);
\fill (1bc) circle (.20cm);
\end{tikzpicture}\\
\tiny (ix)
\end{center}
\end{minipage}
}
\\ \\
\multicolumn{2}{l}{
These are the cubic surfaces with singularity types
$4A_1$, $3A_1$, $2A_1$, $A_2$ and~$A_1$
listed in Theorem~\ref{thm:cubic}.}
\\ \\ \hline
\end{longtable}
\endgroup
\end{theorem}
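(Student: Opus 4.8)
The plan is to proceed in two stages: first classify, up to projective equivalence, the configurations of six distinct points $p_1,\dots,p_6\in\PP_2$ that are not in general position, and then compute $\Cox(X)$ in each case by a sequence of point blow-ups, using \texttt{blowupCEMDSpoints}.

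For the classification I would distinguish according to the maximal number of the $p_i$ lying on a line. If at most three of them are collinear, then $X$ is the minimal desingularization of a cubic surface with rational double points, and the analysis via sub-root systems of $R(E_6)$ carried out in the proof of Theorem~\ref{thm:cubic} shows that the types realizable by six \emph{distinct} points are exactly $A_1,2A_1,A_2,3A_1,A_4$ minus the infinitely near ones, i.e.\ $A_1,2A_1,A_2,3A_1,4A_1$; these are configurations (v)--(ix), and for them $X$ and its Cox ring coincide, up to isomorphism, with those of Theorem~\ref{thm:cubic}. If at least four points lie on a common line $L$, an elementary count finishes the job: two distinct lines meet in a single point, so at most one line can carry four or more of the six points; if all six lie on $L$ we get case (i); if exactly five do, the sixth is an arbitrary point off $L$, giving (ii); and if exactly four do, the only further incidence possible is the line through the two points off $L$ passing through at most one of the four points on $L$, giving (iii) if it does and (iv) otherwise. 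In each case I would bring the configuration into the displayed normal form by an element of $\mathrm{PGL}_3$; the stated conditions that $a,b,c$ be pairwise distinct elements of $\KK^*\setminus\{1\}$ are precisely what guarantees that the six points are distinct and that no unlisted incidence occurs, and the number of free parameters matches the dimension of the relevant moduli.

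For the computation I would follow Example~\ref{ex:parameters}: enter $\PP_2$ as the CEMDS $X_0=(P_0,\Sigma_0,(0))$ over $\KK$ (adjoining the formal parameters of the configuration, if any), then call \texttt{blowupCEMDSpoints} on the list of Cox coordinates of $p_1,\dots,p_6$ with the verification flag on. At each intermediate blow-up the procedure checks the hypotheses of \cite[Thm.~2.6]{HaKeLa}, namely normality of the intermediate ring and the variables defining pairwise non-associated primes, so it certifies that the output $\ZZ^7$-graded ring $\KT{r}/I$ is the Cox ring of $X$ — in particular that $X$ is a Mori dream space, consistent with $(-K_X)^2=3>0$. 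After removing redundant generators with \texttt{compressCEMDS} and expressing the grading in the basis $\ell_0,\dots,\ell_6$ of $\Pic(X)$, one reads off the degree matrix and relations recorded in the table. For (i) and (ii) the Cox ring is a complete intersection — the number of generators exceeds that of relations by $\dim X+\rk\Cl(X)=9$ — so the same computation can be run by hand, applying the algorithms formally over $\KK(a,b,c)$ respectively $\KK(a,b)$; for (iii) and (iv) it is carried out symbolically over $\KK(a)$ respectively $\KK(a,b)$. The explicit \texttt{Singular} scripts are available from \cite{DeHaHeKeLa}.

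The hard part will be the symbolic computations for the parameter families, and especially for the non-complete-intersection cases (iii) and (iv): one must verify that the inequality $\dim(I_2+\langle T_{r_2}\rangle)>\dim(I_2+\langle T_{r_2},T^\nu\rangle)$ of \cite[Algorithm~5.4]{HaKeLa}, as well as the normality and primality checks, hold identically over the admissible locus of the parameters, so that every specialization with the prescribed genericity yields the stated Cox ring over the algebraically closed field $\KK$ and not merely over the function field used in the computation. A smaller point is to make the configuration list demonstrably exhaustive; for the at-most-three-collinear cases this rests on the $E_6$ sub-root-system enumeration invoked in Theorem~\ref{thm:cubic}, and for the remaining cases on the elementary count above.
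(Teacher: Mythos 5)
Your proposal is correct and follows essentially the same route as the paper, whose entire proof consists of running \texttt{blowupCEMDSpoints} on the listed configurations and pointing to the published source code; your computational core is identical, and the extra material you supply (the elementary count showing that the configurations with four or more collinear points are exactly (i)--(iv), the reduction of the at-most-three-collinear cases to Theorem~\ref{thm:cubic}, and the caveat about specializing computations over $\KK(a,b,c)$ back to $\KK$) is a sound justification of exhaustiveness that the paper leaves implicit. The only blemish is the garbled intermediate list ``$A_1,2A_1,A_2,3A_1,A_4$'' where $4A_1$ is clearly intended; your final list of types matches the paper's.
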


\begin{proof}
  This is an application of the procedure \tt{blowupCEMDSpoints}
  of our package \tt{compcox.lib}.
  The source code used to compute the listed Cox rings can 
  be obtained from the homepage~\cite{DeHaHeKeLa} of our package.
\end{proof}

\section{Smooth Fano threefolds}
\label{sec:3fanos}

Smooth Fano threefolds come in a finite number of families. For Picard
number one they have been classified by Iskovskikh~\cite{Is,Is2} and
for Picard number at least two by Mori and Mukai~\cite{MoMu}.  We use
the descriptions of these varieties given in~\cite[Section~12]{AG5} to
compute Cox rings 
in the cases of Picard number at most two.
Recall that the {\em index\/} of a smooth Fano threefold
$X$ is the maximal $r\in \ZZZ$ such that 
$-\frac{1}{r}K_X$ is an integral ample divisor.

\begin{theorem}
\label{thm:fano1}
Let $X$ be a non-toric smooth Fano threefold with Picard number $1$.
Let $r\in \ZZZ$ be the index of $X$.
\begin{enumerate}
\item 
If $-\frac{1}{r}K_X$ is not very ample,
then the $\ZZ$-graded Cox ring $\Cox(X)$ is listed in the following table.\\

\begingroup
\tiny
\begin{longtable}{lll}
\hline
$(r,-K_X^3)$
&
Cox ring $\Cox(X)$
&
degree matrix $Q$
\\ \hline \\
$(1,2)$
&
$
\begin{array}{l}
\KK[T_1,\ldots,T_{5}] / 
\langle
T_5^2-f_1
\rangle    
\end{array}
$
&
$
\left[
\mbox{\tiny $
\begin{array}{rrrrr}
1 & 1 & 1 & 1 & 3
\end{array}
$}
\right]
$
\\ \\ 
\multicolumn{3}{l}{
where $f_1\in \KT{4}$ is homogeneous with $\deg(f_1)=6$
and $V(f_1)\subseteq \PP_3$ is smooth.
}
\\
\\
\hline
\\
$(1,4)$
&
$
\begin{array}{l}
\KK[T_1,\ldots,T_{6}] / 
\langle
f_2,T_6^2-f_3
\rangle    
\end{array}
$
&
$
\left[
\mbox{\tiny $
\begin{array}{rrrrrr}
1 & 1 & 1 & 1 & 1 & 2
\end{array}
$}
\right]
$
\\ \\ 
\multicolumn{3}{l}{
where $f_2,f_3\in \KT{5}$ are homogeneous 
with $\deg(f_2)=2$, $\deg(f_3)=4$}\\[2pt]
\multicolumn{3}{l}{
and both $V(f_2)\subseteq \PP_4$ and $V(f_2,f_3)\subseteq \PP_4$ are smooth.
}
\\
\\
\hline
\\
$(2,8)$
&
$
\begin{array}{l}
\KK[T_1,\ldots,T_{5}] / 
\langle
f_4
\rangle    
\end{array}
$
&
$
\left[
\mbox{\tiny $
\begin{array}{rrrrr}
1 & 1 & 1 & 2 & 3
\end{array}
$}
\right]
$
\\ \\ 
\multicolumn{3}{l}{
where $f_4\in \KT{5}$ is homogeneous with $\deg(f_4)=6$
and $V(f_4)\subseteq \PP(1,1,1,2,3)$}\\[2pt]
\multicolumn{3}{l}{ is smooth.
}
\\
\\
\hline
\\
$(2,16)$
&
$
\begin{array}{l}
\KK[T_1,\ldots,T_{5}] / 
\langle
T_5^2-f_5
\rangle
\end{array}
$
&
$
\left[
\mbox{\tiny $
\begin{array}{rrrrr}
1 & 1 & 1 & 1 &  2
\end{array}
$}
\right]
$
\\ \\
\multicolumn{3}{l}{
where $f_5\in \KT{4}$ is homogeneous 
with $\deg(f_5)=4$ and $V(f_5)\subseteq \PP_3$ is smooth.
}
\\
\\
\hline
\end{longtable}
\endgroup
\item 
If the divisor $-\frac{1}{r}K_X$ is very ample,
  it gives rise to an embedding $\nu\colon X\to\mathbb{P}_n$ and the
  Cox ring of $X$ is isomorphic to the homogeneous coordinate ring
  of~$\nu(X)$. 
 Moreover, for $(r,-K_X^3)=(4,64)$ we have $X=\PP_3$.
 We list the 
 Cox rings for the cases $r>1$ explicitly.
 \begingroup
 \tiny
\begin{longtable}{lll}
\hline
$(r,-K_X^3)$
&
Cox ring $\Cox(X)$
&
degree matrix $Q$
\\ \hline \\
$(2,24)$
&
$
\begin{array}{l}
\KK[T_1,\ldots,T_{5}] / 
\langle
f
\rangle    
\end{array}
$
&
$
\left[
\mbox{\tiny $
\begin{array}{rrrrr}
1 & 1 & 1 & 1 & 1
\end{array}
$}
\right]
$
\\ \\ 
\multicolumn{3}{l}{
where $f$ is homogeneous of degree $3$
and $V(f)=\nu(X)$.
}
\\
\\
\hline
\\
$(2,32)$
&
$
\begin{array}{l}
\KK[T_1,\ldots,T_{6}] / 
\langle
f_1,\,f_2
\rangle    
\end{array}
$
&
$
\left[
\mbox{\tiny $
\begin{array}{rrrrrr}
1 & 1 & 1 & 1 & 1 & 1
\end{array}
$}
\right]
$
\\ \\ 
\multicolumn{3}{l}{
where $f_1,f_2$ are homogeneous of degree $2$ 
and $V(f_1,f_2)=\nu(X)$.
}
\\
\\
\hline
\\
$(2,40)$
&
$
\begin{array}{l}
\KK[T_1,\ldots,T_{10}] / I
\\[1ex]
\text{with $I$ generated by}
\\[1ex]
T_7T_8 - T_6T_9 + T_5T_{10},\\
T_4T_6 - T_3T_7 - T_1T_{10},\\
T_4T_8 - T_3T_9 + T_2T_{10},\\
T_4T_5 - T_2T_7 - T_1T_9,\\
T_3T_5 - T_2T_6 - T_1T_8,\\
f_1,\ f_2,\ f_3
\end{array}
$
&
$
\left[
\mbox{\tiny $
\begin{array}{rrrrrrrrrr}
1 & 1 & 1 & 1 & 1 & 1 & 1 & 1 & 1 & 1
\end{array}
$}
\right]
$
\\ \\ 
\multicolumn{3}{l}{
where $f_1,f_2$ and $f_3$ are homogeneous of degree one.
}
\\
\\
\hline
\\
$(4,54)$
&
$
\begin{array}{l}
\KK[T_1,\ldots,T_{5}] / 
\langle
f
\rangle    
\end{array}
$
&
$
\left[
\mbox{\tiny $
\begin{array}{rrrrr}
1 & 1 & 1 & 1 & 1
\end{array}
$}
\right]
$
\\ \\ 
\multicolumn{3}{l}{
where $f$ is homogeneous of degree $2$
and $V(f)=\nu(X)$.
}
\\
\\
\hline
\\
 \end{longtable}
\endgroup
\end{enumerate}
\end{theorem}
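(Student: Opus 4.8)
The plan is to reduce the entire statement to the computation of the section ring of the fundamental divisor and to read off the answer from Iskovskikh's classification. Since $X$ is a smooth Fano threefold it is simply connected and $H^1(X,\OO_X)=H^2(X,\OO_X)=0$; hence $\Cl(X)=\Pic(X)$ is free abelian, and by the Picard number hypothesis it equals $\ZZ$. Let $H$ be the ample generator --- the fundamental divisor --- so that $-K_X=rH$. As $H$ is ample on a positive-dimensional variety, $\Gamma(X,\OO_X(dH))=0$ for $d<0$, and therefore $\Cox(X)$ is the section ring $R(X,H)=\bigoplus_{d\ge 0}\Gamma(X,\OO_X(dH))$. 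By \cite{Is,Is2} and \cite[Section~12]{AG5}, the pairs $(r,-K_X^3)$ that occur and an explicit model of $X$ in each case are known; I treat the two assertions in turn.

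For part~(i), where $H$ is not very ample, the classification describes $X$ as a double cover or as a quasi-smooth weighted hypersurface. In the cases $(1,2)$ and $(2,16)$, $X$ is the double cover $\pi\colon X\to\PP_3$ branched over a smooth surface $S\subseteq\PP_3$ of degree $6$, respectively $4$; since $\rho(X)=1$ one has $\Cl(X)=\ZZ\cdot\pi^*[\OO(1)]=\ZZ\cdot[H]$, and Lemma~\ref{lem:doublecover} applied to $\pi$ gives $\Cox(X)=\KK[T_1,\ldots,T_5]/\langle T_5^2-f\rangle$, with $f$ an equation of $S$ and $\deg T_5=\tfrac12\deg S$, which is the claimed presentation (weight vectors $(1,1,1,1,3)$ and $(1,1,1,1,2)$). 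For $(1,4)$, $X$ is the double cover of the smooth quadric threefold $Q=V(f_2)\subseteq\PP_4$ branched over $Q\cap V(f_3)$; as $\Cox(Q)=\KK[T_1,\ldots,T_5]/\langle f_2\rangle$, Lemma~\ref{lem:doublecover} yields $\Cox(X)=\KK[T_1,\ldots,T_6]/\langle f_2,\,T_6^2-f_3\rangle$ with $\deg T_6=2$. For $(2,8)$, the del Pezzo threefold of degree $1$, I would use its model as a quasi-smooth, well-formed hypersurface $V(f_4)$ of degree $6$ in $\PP(1,1,1,2,3)$ --- equivalently the double cover of the toric variety $\PP(1,1,1,2)$ branched over a sextic: such a hypersurface avoids the singular locus of the weighted ambient space and has class group $\ZZ$, so its Cox ring is the evident quotient $\KK[T_1,\ldots,T_5]/\langle f_4\rangle$ of the Cox ring $\KK[T_1,\ldots,T_5]$ of $\PP(1,1,1,2,3)$, with weight vector $(1,1,1,2,3)$; alternatively one computes $R(X,H)$ directly from $\pi_*\OO_X=\OO\oplus\OO(-3)$ on $\PP(1,1,1,2)$. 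The smoothness and degree constraints imposed on the $f_i$ in the statement are exactly those making these covers smooth and Fano and are recorded in \cite[Section~12]{AG5}.

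For part~(ii), where $H$ is very ample, $|H|$ embeds $X$ as $\nu(X)\subseteq\PP_n$ with $n+1=\dim\Gamma(X,\OO_X(H))$. In each of the relevant families the fundamental embedding is projectively normal, i.e.\ $\mathrm{Sym}^d\Gamma(X,\OO_X(H))\to\Gamma(X,\OO_X(dH))$ is surjective for all $d\ge 0$: this is immediate for the hypersurface and complete-intersection models, and for the degree-$5$ del Pezzo threefold it follows from projective normality of $\mathrm{Gr}(2,5)$ and its linear sections. Projective normality gives $\Cox(X)=R(X,H)\cong\KK[T_0,\ldots,T_n]/I(\nu(X))$, the homogeneous coordinate ring, so it remains to identify $I(\nu(X))$: for $(2,24)$ and $(4,54)$ it is principal, generated by a cubic, resp.\ a quadric, in $\PP_4$; for $(2,32)$ it is generated by two quadrics in $\PP_5$; and for $(2,40)$, $\nu(X)\subseteq\PP_6$ is a codimension-$3$ linear section of $\mathrm{Gr}(2,5)$ in the Pl\"ucker embedding, whose homogeneous ideal is generated by the five Pl\"ucker quadrics together with the three linear forms defining the section --- these are exactly the listed generators. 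The pair $(4,64)$ corresponds to $X=\PP_3$ by \cite{Is}, which is toric and hence outside the hypothesis, but is recorded for completeness.

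The main obstacle I expect is twofold. In part~(ii), one must pin down the homogeneous ideal of the degree-$5$ del Pezzo threefold: it has to be known that the Pl\"ucker quadrics and the three linear forms generate it, i.e.\ that projective normality and ideal generation descend to linear sections of $\mathrm{Gr}(2,5)$. In part~(i), one must verify the hypotheses of Lemma~\ref{lem:doublecover} --- that each double cover genuinely has $\Cl=\ZZ$ with no jump of the Picard number and is smooth for the stated branch data --- and, for $(2,8)$, confirm that the mild cyclic quotient singularities of the weighted ambient space do not obstruct identifying $\Cox(X)$ with the naive quotient ring.
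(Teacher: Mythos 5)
Your part (i) follows the paper's route exactly: Lemma~\ref{lem:doublecover} for the double covers in the cases $(1,2)$, $(2,16)$ and (over the quadric $Q=V(f_2)$) $(1,4)$, and the weighted--hypersurface/del Pezzo description for $(2,8)$, which is the paper's appeal to \cite[Cor.~4.1.1.3]{ArDeHaLa}. The explicitly tabulated $r>1$ entries of part (ii) are also fine as you argue them: hypersurfaces and complete intersections in $\PP_n$ are arithmetically Cohen--Macaulay, and the degree-$5$ del Pezzo threefold inherits projective normality and ideal generation from $\mathrm{Gr}(2,5)$.

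The gap is in the blanket claim of part (ii). The statement asserts, for \emph{every} $X$ with $-\frac1r K_X$ very ample, that $\Cox(X)$ is the homogeneous coordinate ring of $\nu(X)$; this includes all index-one Fano threefolds of genus $6\le g\le 12$ (i.e.\ $-K_X^3=10,12,\ldots,22$), whose anticanonical models are \emph{not} hypersurfaces or complete intersections, so your ``immediate for the hypersurface and complete-intersection models'' covers none of them and you offer no argument for them. What is actually needed is generation of $R(X,H)$ in degree one for these families, and this is precisely the content of the paper's proof: for $r=1$ it runs the Kawamata--Viehweg ladder
\[
0\ \to\ H^0(X,(n-1)H)\ \to\ H^0(X,nH)\ \to\ H^0(S,nC)\ \to\ 0,
\qquad
0\ \to\ H^0(S,(n-1)C)\ \to\ H^0(S,nC)\ \to\ H^0(C,nK_C)\ \to\ 0,
\]
with $S\in|H|$ a K3 surface and $C\subseteq S$ a canonically embedded (hence non-hyperelliptic) curve, and concludes by Max Noether's theorem that $R(C,K_C)$ is generated in degree one, propagating the surjectivity of $\mathrm{Sym}^d H^0(H)\to H^0(dH)$ up the ladder; for $r=2$ the bottom rung is instead generation in degree one of the anticanonical ring of a del Pezzo surface of degree $\ge 3$. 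Without this step (or an explicit citation of projective normality of the anticanonical model for each index-one family), the general assertion of part (ii) remains unproved in your write-up.
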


\begin{remark}
 In Theorem~\ref{thm:fano1}, the 
 Cox rings with $r=2$
  can easily be adjusted to
 the Cox rings of higher dimensional 
 del Pezzo varieties as classified in~\cite[Table~12.1]{AG5}.
\end{remark}

For the proof of Theorem~\ref{thm:fano1}, we first collect some observations on
$n$-cyclic covers of smooth varieties,
see~\cite[Sec.~I.17]{BHPV} for the definition and 
basic background.

\begin{lemma}
\label{pic}
Let $\pi \colon X \to Y$ be a $n$-cyclic cover of smooth 
varieties with a smooth, prime branch divisor 
$B \in \WDiv(Y)$.
Assume that $\Cl(X)$ is free and finitely generated
and that $\pi^*\Cl(Y)$ is of finite index in 
$\Cl(X)$.
Then we have $\pi^*\Cl(Y) = \Cl(X)$.
\end{lemma}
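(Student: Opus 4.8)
The plan is to reduce the equality $\pi^*\Cl(Y)=\Cl(X)$, via the excision sequences for class groups, to the complement of the ramification divisor, where $\pi$ becomes an \'etale cyclic cover and a small cohomological vanishing finishes the job. Write $\mathcal L\in\Pic(Y)$ for the line bundle defining the $n$-cyclic cover, so that $\mathcal L^{\otimes n}\cong\OO_Y(B)$, let $R\subseteq X$ be the ramification divisor, and recall $\pi^*B=nR$ as Cartier divisors on $X$; since $B$ is smooth and prime, $R$ is a prime divisor and $\pi$ restricts to an isomorphism $R\xrightarrow{\sim}B$. The first observation is that $[R]\in\pi^*\Cl(Y)$: pulling back $\mathcal L^{\otimes n}\cong\OO_Y(B)$ gives $(\pi^*\mathcal L)^{\otimes n}\cong\OO_X(nR)\cong\OO_X(R)^{\otimes n}$, so $\pi^*\mathcal L\otimes\OO_X(-R)$ is $n$-torsion in $\Pic(X)=\Cl(X)$; as $\Cl(X)$ is free, this forces $\OO_X(R)\cong\pi^*\mathcal L$, i.e.\ $[R]=\pi^*[\mathcal L]$.

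Next I would pass to $X^\circ:=X\setminus R$ and $Y^\circ:=Y\setminus B$, so that $X^\circ=\pi^{-1}(Y^\circ)$ and $\pi\colon X^\circ\to Y^\circ$ is a finite \'etale cover, Galois with group $G\cong\ZZ/n\ZZ$ acting freely. Two auxiliary facts are needed. First, $G$ acts trivially on $\Cl(X)$: since $g\circ\pi=\pi$ for every $g\in G$, the subgroup $\pi^*\Cl(Y)$ is fixed pointwise by $G$, and being of finite index in the torsion-free group $\Cl(X)$ it spans $\Cl(X)\otimes\QQ$; hence $G$ acts trivially on $\Cl(X)$, and therefore also on the quotient $\Cl(X^\circ)$. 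Second, $\OO(X^\circ)^*=\KK^*$: the variety $X$ is projective (being finite over the projective $Y$), so $\OO(X)^*=\KK^*$, and $R$ is prime with $[R]\neq 0$ of infinite order in the torsion-free group $\Cl(X)$, so no nonconstant invertible regular function can be supported on $R$.

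The key step is then the surjectivity of $\pi^*\colon\Cl(Y^\circ)\to\Cl(X^\circ)$. Since $G$ acts freely on $X^\circ$ with quotient $Y^\circ$, a line bundle on $X^\circ$ descends to $Y^\circ$ exactly when it carries a $G$-linearization; by the first auxiliary fact, every line bundle $\mathcal M$ on $X^\circ$ satisfies $g^*\mathcal M\cong\mathcal M$ for all $g$, and the obstruction to linearizing it lies in $H^2(G,\OO(X^\circ)^*)$. By the second auxiliary fact and $G$ cyclic, $H^2(G,\OO(X^\circ)^*)=H^2(\ZZ/n\ZZ,\KK^*)=\KK^*/(\KK^*)^n=0$ because $\KK$ is algebraically closed (equivalently, one may run the five-term exact sequence of the Hochschild--Serre spectral sequence of the Galois cover for the sheaf $\mathbb G_m$). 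Hence $\Cl(X^\circ)=\pi^*\Cl(Y^\circ)$. Finally, given $D\in\Cl(X)$, restrict to $X^\circ$ and write $D|_{X^\circ}=\pi^*(E|_{Y^\circ})$ with $E\in\Cl(Y)$ a lift of a preimage under $\Cl(Y)\twoheadrightarrow\Cl(Y^\circ)$; then $D-\pi^*E$ lies in the kernel of $\Cl(X)\to\Cl(X^\circ)$, which is generated by $[R]=\pi^*[\mathcal L]$, so $D-\pi^*E=c\,\pi^*[\mathcal L]$ for some $c\in\ZZ$ and $D=\pi^*(E+c[\mathcal L])\in\pi^*\Cl(Y)$.

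I expect the main obstacle to be precisely the surjectivity onto $\Cl(X^\circ)$, namely making it rigorous that over the \'etale locus the only potential new divisor classes are those obstructed by $H^2(G,\KK^*)$, which vanishes; this is where the free $G$-action and the hypotheses enter essentially. The remaining ingredients — the excision sequences, the torsion-freeness argument pinning down $[R]$, and the triviality of the $G$-action — are routine bookkeeping once this cohomological input has been isolated.
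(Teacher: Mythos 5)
Your proof is correct in the setting where the lemma is actually used, but it takes a genuinely different route from the paper's. The paper argues divisor by divisor on $X$ itself: for a $\sigma$-invariant divisor it reduces to reduced orbit divisors, handled by $D=\pi^*\pi(D)$ away from the branch locus and by \cite[Lemma~I.17.1(i)]{BHPV} for the ramification divisor; for a non-invariant $D$ it first notes, by the same finite-index/torsion-freeness argument you use, that $\sigma$ acts trivially on $\Cl(X)$, and then produces a $\sigma$-invariant representative of $[D]$ by taking an eigenvector of the twisted pullback $f\mapsto\sigma^*f/g$ on $\Gamma(X,\OO(D))$, after normalizing $g\,\sigma^*g\cdots(\sigma^*)^{n-1}g=1$. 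That eigenvector trick is an explicit, hands-on incarnation of exactly the cohomological vanishing you isolate: you instead remove the ramification divisor, observe that $\pi$ is a free \'etale cyclic cover over $Y\setminus B$, obtain surjectivity of $\pi^*$ on the class groups of the complements from the linearization obstruction (five-term sequence) in $H^2(\ZZ/n\ZZ,\KK^*)=0$, and then climb back via excision together with $[R]=\pi^*[\mathcal L]$ (your torsion-freeness derivation of the latter re-proves the BHPV lemma the paper simply cites). Your route is more conceptual and makes the group-cohomological input visible; the paper's stays entirely with Weil divisors and linear algebra on section spaces and never needs to control units on the open part.

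One caveat: to conclude $\OO(X\setminus R)^*=\KK^*$ (needed so that the obstruction group is $H^2(G,\KK^*)$ rather than something larger) you invoke projectivity of $Y$, which is not among the stated hypotheses of the lemma; it is what gives both $\OO(X)^*=\KK^*$ and $[R]\neq 0$. In the paper's applications (double covers of smooth Fano varieties, Lemma~\ref{lem:doublecover}) everything is projective, and the paper's own normalization of $g$ tacitly uses $\OO(X)^*=\KK^*$ as well, so this is harmless in context --- but as written your argument establishes the lemma only under this additional completeness assumption, and you should either state it or explain how to remove it.
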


\begin{proof}
Consider the covering automorphism $\sigma \colon X \to X$ of order $n$.
We show that each $D\in \WDiv(X)$
is linearly equivalent to a pullback divisor.

Assume $\sigma(D)=D$ holds. 
Observe that any such divisor is an integral
sum $a_1 E_1+\ldots +a_s E_s$
of reduced invariant divisors $E_i$
where each $E_i$ is the sum of all the prime divisors of a $\<\sigma\>$-orbit.
Thus, it suffices to show the claim
for a divisor $D:=E_i$.
If $D$ is the ramification divisor, i.e., if 
$\pi(D) = B$ holds, then this is guaranteed 
by~\cite[Lemma~I.17.1(i)]{BHPV}. 
So, we are left with the case $\pi(D) \ne B$. 
Then we have
$D = \pi^{-1}(\pi(D))$.
Since $\pi$ is unramified outside $\pi^{-1}(B)$,
we obtain $D = \pi^* \pi(D)$.

Assume now $\sigma(D) \ne D$ holds. 
Since 
$\pi^*\Cl(Y)$ is of finite index in the 
free finitely generated $\Cl(X)$ 
and $\sigma^{-1}$  
acts trivially on $\pi^*\Cl(Y)$,
we see that $\sigma^{-1}$ acts trivially on the
whole $\Cl(X)$.
We conclude $D - \sigma^{-1}(D) = \div(g)$
with some $g \in \KK^*(X)$.
We claim that
$$ 
\tau \colon \Gamma(X,\mathcal{O}(D)) \ \to \ \Gamma(X,\mathcal{O}(D)),
\qquad
f \ \mapsto \ \frac{\sigma^*f}{g}
$$
is a linear isomorphism
and, after suitably rescaling $g$,
the order of $\tau$ divides $n$.
Indeed,
$\sigma^*f/g$ is an element of $\Gamma(X,\mathcal{O}(D))$
and thus $\tau$ is a well defined linear map.
Iteratively applying $\sigma^{-i}$ to 
$D - \sigma^{-1}(D) = \div(g)$
leads to $n$ equations; 
adding them up and using 
$\sigma^{-1}(\div(g)) = \div(\sigma^*g)$,
we achieve $g \sigma^*g\cdots (\sigma^*)^{n-1}g = 1$
after possibly rescaling $g$.
Therefore, the order of the linear map $\tau$ divides $n$. 
Now, take an eigenvector $h \in \Gamma(X,\mathcal{O}(D))$
of $\tau$. 
Then $D' := \div(h) + D$ is invariant under $\sigma^{-1}$ and
thus, as seen before, a pullback divisor.
Consequently, $D$ is linearly equivalent to a pullback 
divisor.
\end{proof}

\begin{lemma}
\label{lem:doublecover}
Let $\pi\colon X\to Y$ be an $n$-cyclic cover of smooth Fano 
varieties with ample branch divisor $B=\div_{[B]}(f)$. 
Assume that the Picard
rank of $X$ equals that of $Y$.
Then there is an isomorphism
\[
 \mathcal R(X)\ \ \cong\ \ \mathcal R(Y)[S]/\langle S^n-f\rangle.
\]
of $\Cl(X)$-graded algebras.
The grading of $\mathcal R(X)$
is the same as for $\mathcal R(Y)$
with the additional degree $\deg(S)=\deg(f)/n$.
\end{lemma}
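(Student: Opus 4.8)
The plan is to reduce the comparison of class groups to Lemma~\ref{pic} and then to write the asserted isomorphism explicitly and check it one graded piece at a time. First I would settle the divisor class groups. Since $X$ and $Y$ are smooth Fano varieties, $\Cl(X)=\Pic(X)$ and $\Cl(Y)=\Pic(Y)$ are finitely generated and free. As $\pi$ is finite and surjective, the composite of $\pi^*$ with the norm map is multiplication by $n=\deg\pi$ on $\Pic(Y)$, so $\pi^*$ is injective; together with the hypothesis that the Picard ranks agree, this shows that $\pi^*\Cl(Y)$ is of finite index in $\Cl(X)$. Moreover, smoothness of $X$ forces the branch divisor $B$ to be smooth and reduced, and being ample on the (at least two-dimensional) Fano variety $Y$ it is connected, hence prime. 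Thus Lemma~\ref{pic} applies and gives $\pi^*\Cl(Y)=\Cl(X)$. From now on I would identify $\Cl(X)$ with $\Cl(Y)$ via $\pi^*$. Writing $L$ for the line bundle of class $\ell:=\deg(f)/n$ with $L^{\otimes n}\cong\mathcal{O}_Y(B)$ that defines the cover (the class $\ell$ exists as part of the covering data and is unique since $\Pic(Y)$ is torsion free) and $R$ for the ramification divisor, so that $\pi^*B=nR$, this identification sends $[R]$ to $\ell$.

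Next I would write down the homomorphism. The cyclic cover is $X=\Spec_Y\pi_*\mathcal{O}_X$ with $\pi_*\mathcal{O}_X=\bigoplus_{i=0}^{n-1}L^{-i}$, the algebra structure being twisted by the section $f\in\Gamma(Y,L^{\otimes n})$, and it carries a tautological section $s\in\Gamma(X,\mathcal{O}_X(R))$ with $s^n=\pi^*f$; see~\cite[Sec.~I.17]{BHPV}. I would then define
\[
 \Phi\colon \mathcal{R}(Y)[S]/\langle S^n-f\rangle \ \longrightarrow\ \mathcal{R}(X),
 \qquad
 g\longmapsto\pi^*g \quad (g\in\mathcal{R}(Y)),
 \qquad S\longmapsto s.
\]
This is well defined because $s^n=\pi^*f$, and it is a homomorphism of $\Cl(Y)$-graded algebras: the source carries the $\Cl(Y)$-grading extending that of $\mathcal{R}(Y)$ with $\deg S=\deg(f)/n=\ell$, while on the target $\pi^*g$ keeps the degree of $g$ and $\deg s=[R]$ corresponds to $\ell$ under the identification above.

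Finally I would check that $\Phi$ is bijective degree by degree. Fixing $w\in\Cl(Y)$, the degree-$w$ component of the source is $\bigoplus_{i=0}^{n-1}\Gamma(Y,\mathcal{O}_Y(w-i\ell))\,S^i$, since $S^i$ has degree $i\ell$. On the target, the projection formula gives
\[
 \Gamma(X,\mathcal{O}_X(\pi^*w))
 \ =\ \Gamma\bigl(Y,\mathcal{O}_Y(w)\otimes\pi_*\mathcal{O}_X\bigr)
 \ =\ \bigoplus_{i=0}^{n-1}\Gamma\bigl(Y,\mathcal{O}_Y(w-i\ell)\bigr),
\]
where the $i$-th summand is the weight-$i$ eigenspace of the covering automorphism and consists precisely of the sections $(\pi^*h)\,s^i$ with $h\in\Gamma(Y,\mathcal{O}_Y(w-i\ell))$, the assignment $h\mapsto(\pi^*h)\,s^i$ being injective since $\pi$ is dominant and $s\neq 0$. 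Hence in degree $w$ the map $\Phi$ is the direct sum over $i$ of these isomorphisms, and since $w$ was arbitrary $\Phi$ is an isomorphism of $\Cl(X)=\Cl(Y)$-graded algebras, which is the claim.

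I expect the main obstacle to be this last step: matching the weight decomposition of $\pi_*\mathcal{O}_X(\pi^*w)$ coming from the cyclic cover construction with the powers of the tautological section $s$, and checking that the shifts by $\ell$ on the two sides line up. Everything else — in particular the class-group comparison — is essentially formal once Lemma~\ref{pic} is invoked.
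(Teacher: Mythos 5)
Your proposal is correct and follows essentially the same route as the paper: establish that the ample branch divisor is connected, hence prime (so the ramification divisor is prime), verify the finite-index hypothesis so that Lemma~\ref{pic} yields $\pi^*\Cl(Y)=\Cl(X)$, and then identify $\mathcal R(X)$ with $\mathcal R(Y)[S]/\langle S^n-f\rangle$. The only difference is that where the paper simply cites \cite[Prop.~5.3.1.3]{ArDeHaLa} for the final isomorphism, you prove it directly via the eigenspace decomposition $\pi_*\mathcal O_X=\bigoplus_{i=0}^{n-1}L^{-i}$ and the tautological section $s$ with $s^n=\pi^*f$ — a correct unpacking of that cited result, and you also make explicit the injectivity/finite-index argument for $\pi^*$ that the paper leaves implicit.
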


\begin{proof}
  Since $B$ is ample, it is connected and thus it must be 
  irreducible,  as $X$ is smooth.
  Hence the ramification divisor $R$ is irreducible
  and the hypothesis of Lemma~\ref{pic} is satisfied.
  Then the result
  follows from \cite[Proposition 5.3.1.3]{ArDeHaLa}.
\end{proof}

\begin{proof}[Proof of Theorem~\ref{thm:fano1}]
First, observe that according to~\cite[Table 12.2]{AG5}
we have $r\in\{1,2,3,4\}$. Moreover, $X=\mathbb{P}_3$
when $r=4$ and $X$ is a smooth quadric of $\mathbb{P}_4$
for the case of $r=3$. 

We now show statement (ii) for $r\in \{1,2\}$.
In what follows $S$ will be a smooth surface in $X$
linearly equivalent to $-\frac{1}{r}K_X$ and $C$ will 
be a smooth curve of $S$ linearly equivalent to 
$\OO_X(S)|_S$.
Since $S$ is ample and $rS$ is linearly equivalent
to $-K_X$, we have $(n-1)S = K_X +A$ with $A$ ample
for any $n\in \ZZ_{\geq 1}$.
By the Kawamata--Viehweg vanishing theorem,
for any such $n$,
 we have an exact sequence
\[
 \xymatrix@R=5pt@C=15pt{
  0\ar[r] & H^0(X,(n-1)S)\ar[r] & H^0(X,nS)\ar[r] & H^0(S,nC)\ar[r] & 0.
 }
\]

Case $r=2$. By the adjunction formula $S$ is a del Pezzo surface and
$C\sim -K_S\sim -\tfrac{1}{2}K_X|_S$.  
According to~\cite[Table 12.2]{AG5}, the surface $S$
has degree $\geq 5$; if the degree is $\leq 2$ then
$-\tfrac{1}{2}K_X$ is not very ample
since its restriction to $S$ is not very ample.
For the case of degree $\geq 3$, by~\cite[Theorem~8.3.4]{Do},
making use of the above exact sequence
one directly shows that the anticanonical ring
$R(S,C)=R(S,-K_S)$ is generated in degree one. Thus, by the above
exact sequence, the same holds for $R(X,S)\cong \Cox(X)$.

Case $r=1$. By the adjunction formula $S$ is a K3 surface 
and $C$ is a canonically embedded curve and thus $C$ 
is non-hyperelliptic.
By the Kawamata--Viehweg vanishing theorem and the
ampleness of $C$, we have 
for any $n\in \ZZ_{\geq 1}$
an exact sequence
\[
 \xymatrix@R=5pt@C=15pt{
  0\ar[r] & H^0(S,(n-1)C)\ar[r] & H^0(S,nC)\ar[r] & H^0(C,nK_C)\ar[r] & 0.
 }
\]
Since $C$ is non-hyperelliptic, its canonical ring
$R(C,K_C)$
is generated in degree one by Max Noether's Theorem
~\cite[page~117]{ACGH}. Thus, due to the above
exact sequences, the same holds for the algebras
$R(S,C)$ and $R(X,-K_X)$.

For both $r=1$ and $r=2$ this shows in particular
that the Cox ring $\Cox(X)$ equals the homogeneous 
coordinate ring of $\nu(X)$; they are directly obtained
from~\cite[Table~12.2]{AG5}.

We now compute the Cox rings listed in the table of~(i).
If $(r, -K_X^3)=(1,2)$, by~\cite[Table~12.2]{AG5}, $X\to \PP_3$
is a double cover with the branch divisor of degree six.
If $(r, -K_X^3)=(2,16)$, by~\cite[Table~12.2]{AG5}, 
$X$ is a finite cover of $\PP_3$ of degree two 
branched along a smooth surface in $\PP_3$ 
of degree four.
In both cases, Lemma~\ref{lem:doublecover} gives the Cox ring.
For $(r, -K_X^3)=(2,8)$ the variety $X$ is del Pezzo
and therefore appears in~\cite[Table~12.1]{AG5}, i.e.,
we have $X=V(f_5)$ and~\cite[Corollary~4.1.1.3]{ArDeHaLa}
yields the listed Cox ring.
Similarly, for $(r, -K_X^3)=(1,4)$
we use again~\cite[Corollary~4.1.1.3]{ArDeHaLa}
and obtain
$$
\Cox(Q)\ =\ \KT{5}/\<f_2\>\qquad
\text{where}\qquad
Q\ :=\ V(f_2)\,\subseteq\, \PP_4.
$$
By~\cite[Table~12.2]{AG5}, $X$ is a double cover of $Q$
with the branch divisor of degree eight. 
Lemma~\ref{lem:doublecover} applies.
\end{proof}

The classification of Mori and Mukai leads to $36$ deformation
families of smooth Fano threefolds with Picard number $2$, as listed
in \cite[Table~12.3]{AG5}. Families 33--36 are toric. In the following
result, we compute Cox rings for the families 1--32: in some
cases, we can treat the whole family using computations by hand; in
other cases, we treat only a typical representative
using our software \texttt{compcox.lib} from Section~\ref{sec:example},
 see Example~\ref{ex:X19}. 
The code used to obtain these Cox rings is available at~\cite{DeHaHeKeLa}.

\begin{theorem}
\label{thm:fano2}
Let $X_i$ be as in the classification of non-toric smooth Fano
threefolds in~\cite[Table~12.3]{AG5}. Then the Cox ring is of the
following form (for the cases marked with $\dagger$ we only computed
a typical representative).
\\[1ex]
(1) The smooth Fano threefold $X_{1}$ has the $\ZZ^2$-graded Cox ring
$\KT{6}/I$ with a generator for $I$ and the degree matrix given by
\begin{center}
\tiny
$
% [inline block 1: 58 envs, 25807 chars -> data_tex | \begin{array}{l} T_1^2...]

\right]
$}
\end{center}
where $V(f)\subseteq \PP_2\times \PP_2$
is smooth 
and $f$ is of degree  $(1,1)\in \ZZ^2$.
\end{theorem}

For the proof of Theorem~\ref{thm:fano2}, note that each Fano threefold
$X_i$ has been described in~\cite{AG5} as a blow-up $X_i\to X$ along
a subvariety $C\subseteq X$ where we know the Cox
ring of $X$, e.g., from Theorem~\ref{thm:fano1}.
For the $\dagger$-cases, we present a concrete choice for $C$
and directly apply the procedure \tt{blowupCEMDS} 
from Section~\ref{sec:example}; see Example~\ref{ex:X19} for 
an example computation and~\cite{DeHaHeKeLa} for all used code.
For the remaining $X_i$,  
we apply~\cite[Algorithm~5.4]{HaKeLa} by hand as 
explained in Section~\ref{sec:example}; the following remark summarizes the steps.

 \begin{remark}
 \label{rem:steps}
Consider a Mori dream space $X_1$
and an irreducible subvariety $C \subseteq X_1$ 
that is contained in the
smooth locus $X_1^{\rm reg}$.
Assume we know $\Cl(X_1)$-prime generators $f_1,\ldots,f_l\in \Cox(X_1)$
for the vanishing ideal $I(p^{-1}(C))\subseteq \Cox(X_1)$
where $p\colon\widehat X_1 \to X_1$ is the quotient 
by the characteristic quasitorus. 
Let $d_1,\ldots,d_l\in \ZZ_{\geq 1}$ be coprime;
we have discussed them on page~\pageref{eq:multiplic}.
In geometric terms, $d_i$ is the multiplicity
of the prime divisor defined by $f_i$
at the generic point of $C$.
We now summarize the steps of~\cite[Algorithm~5.4]{HaKeLa}. 
\begin{itemize}\label{list:blowupcemdssteps}
\item
Assume $\Cox(X_1) = \KT{r_1}/I_1$ with
degree matrix $Q_1$.
Set $w_i :=\deg(f_i)$ for $1\leq i\leq l$
and $r_2 := r_1+l+1$.
Consider the ideals and matrix
\begin{gather*}
 \qquad
 I_2'
 \ :=\ 
 I_1 +
 \<
T_{r_1+j}T_{r_2}^{d_j} - f_j 
 \>,\quad
 I_2
 \ :=\ 
 I_2':T_{r_2}^\infty
 \quad \subseteq\ 
 \KT{r_2},\\
 Q_2
 \ :=\ 
 \left[
 \mbox{\tiny
 $
 \begin{array}{c|rrrrr}
  Q_1 & w_1 & \dots & w_l & 0 \\
  \hline
  0 & -d_1 & \dots & -d_l & 1
 \end{array}
 $}
 \right].
\end{gather*}
\item
Let $T^\nu$ be the product over all $T_i$ with $C\not\subseteq V(X_1;\,T_i)$.
Test whether we have 
$$
\dim(I_2 +\<T_{r_2}\>)\ >\ \dim(I_2 + \<T_{r_2},T^\nu\>).
$$
\end{itemize}
Then the blow-up $X_2\to X_1$ of $X_1$ along $C$ has 
the $\Cl(X_1)\oplus \ZZ$-graded Cox ring
$\Cox(X_2) = \KT{r_2}/I_2$
with degree matrix~$Q_2$.
\end{remark}
 
In that notation, the computation of $I_2$ becomes simple
for the case of a prime ideal $I_2'\subseteq\KT{r_2}$
 since we then have $I_2 = I_2'$.
For the case of a complete intersection ring $\KT{r_2}/I_2'$,
we will use the following lemma.
Here, we assume the grading to be {\em pointed\/}, i.e.,
the cone over $\deg(T_1),\ldots,\deg(T_{r_2})$ is pointed
and all $\deg(T_i)$ are non-zero.

\begin{lemma}
\label{lem:serre}
Set $R:=\KT{r}$.
Consider an ideal $I\subseteq R[S_1,\ldots,S_n]$
generated by polynomials $h_1,\ldots,h_s$ of shape
\[
h_i\ =\ 
g_i - g_i'
,\qquad
g_i\ \in\ R
,\qquad
g_i'\ \in\ \KK[S_1,\ldots,S_n]
\]
such that the 
$g_i$ are classically homogeneous,
$V(g_1,\ldots,g_s)\subseteq \PP_{r-1}$ is smooth
and of dimension~$r-s-1$.
Assume $I$ is homogeneous with respect to a pointed grading.
Then both $\<g_1,\ldots,g_s\>\subseteq R$ and 
$I\subseteq R[S_1,\ldots,S_n]$ are prime.
\end{lemma}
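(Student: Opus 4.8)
The plan is to reduce the statement about the ``mixed'' ideal $I \subseteq R[S_1,\dots,S_n]$ to the statement about $\langle g_1,\dots,g_s\rangle \subseteq R$, and to establish the latter via Serre's criterion. First I would observe that the homogeneous polynomials $g_1,\dots,g_s$ form a regular sequence in $R = \KT{r}$: indeed, $V(g_1,\dots,g_s)\subseteq \PP_{r-1}$ has dimension $r-s-1$, which is the expected codimension $s$, so the projective scheme is a complete intersection of the right dimension; since $R$ is Cohen--Macaulay, a sequence of homogeneous elements cutting out a subvariety of the expected codimension is automatically regular. Hence $A := R/\langle g_1,\dots,g_s\rangle$ is a complete intersection ring, in particular Cohen--Macaulay, so it satisfies Serre's condition $(S_k)$ for all $k$. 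To conclude that $A$ is an integral domain it then suffices, by Serre's criterion for reducedness together with normality (or more directly: a Cohen--Macaulay ring that is $(R_1)$ and whose $\Spec$ is connected is a normal domain), to check the regularity condition $(R_1)$: the singular locus of $\Spec A$ has codimension at least $2$.

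For the $(R_1)$ check, I would argue as follows. The affine cone $\Spec A$ over $V(g_1,\dots,g_s)\subseteq \PP_{r-1}$ is smooth away from the origin, because smoothness of the projective complete intersection $V(g_1,\dots,g_s)$ forces smoothness of the punctured affine cone (the Euler sequence / the fact that the cone map $\mathbb{A}^r\setminus\{0\}\to\PP_{r-1}$ is smooth). Since $\dim A = r - s \ge 2$ — here one uses that $V(g_1,\dots,g_s)\subseteq\PP_{r-1}$ is nonempty of dimension $r-s-1\ge 0$, so $r-s\ge 1$, and in fact $r-s\ge 2$ unless the cone is a line, which is excluded as the cone would then be $\PP_{r-1}$-irreducible of dimension $1$, a case one handles separately or rules out — the origin is a point of codimension $\ge 2$ in $\Spec A$. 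Thus $\Sing(\Spec A) \subseteq \{0\}$ has codimension $\ge 2$, giving $(R_1)$. Connectedness of $\Spec A$ is clear since $A$ is graded with $A_0 = \KK$. Therefore $A$ is a normal domain, and in particular $\langle g_1,\dots,g_s\rangle \subseteq R$ is prime.

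For the second assertion, I would use the substitution isomorphism. Consider the ring homomorphism
\[
\varphi\colon R[S_1,\dots,S_n]/I \ \longrightarrow\ A[S_1,\dots,S_n]\big/\langle g_i' - \overline{g_i} : 1\le i\le s\rangle
\]
and note that, since each $h_i = g_i - g_i'$ expresses $g_i$ in terms of the $S_j$ modulo $I$, one can use the relations $h_i$ to eliminate: the quotient $R[S_1,\dots,S_n]/I$ is isomorphic to $A[S_1,\dots,S_n]$ itself via the map that sends $S_j\mapsto S_j$ and is identity on $R$ — more precisely, $I$ is generated by $h_i = g_i - g_i'$, and in $R[S]/I$ every $g_i$ becomes a polynomial in the $S_j$, so modding out by these relations is the same as performing the base change $R \to A$ followed by adjoining the $S_j$ freely, hence $R[S_1,\dots,S_n]/I \cong A[S_1,\dots,S_n]$. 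Since $A$ is a domain by the first part and a polynomial ring over a domain is a domain, $I$ is prime. I would double-check the elimination step carefully: one writes down the explicit $R$-algebra isomorphism $R[S_1,\dots,S_n]/I \xrightarrow{\sim} A[S_1,\dots,S_n]$ using that the $g_i'$ lie in $\KK[S_1,\dots,S_n]$, so no circularity arises, and verify it respects the pointed grading (needed only for the ``homogeneous'' bookkeeping, not for primality itself).

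The main obstacle I anticipate is the bookkeeping around low-dimensional degenerate cases in the $(R_1)$ step — specifically confirming that $\dim A \ge 2$ so that the origin really is a codimension-$\ge 2$ point, and correctly invoking the precise form of Serre's criterion (Cohen--Macaulay $+$ $(R_1)$ $+$ connected $\Rightarrow$ normal domain). The passage from ``smooth projective complete intersection'' to ``punctured affine cone is smooth'' and then to ``the only possible singular point is the vertex'' is standard but should be stated with a reference; everything else (regular sequence from expected codimension in a Cohen--Macaulay ring, the elimination isomorphism for $I$) is routine commutative algebra.
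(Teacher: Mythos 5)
Your first assertion (primality of $\<g_1,\ldots,g_s\>$ in $R$) is argued essentially as in the paper: Serre's criterion on the affine cone, whose singular locus is confined to the vertex by smoothness of the projective complete intersection, together with Cohen--Macaulayness from the regular sequence. That part is fine (the codimension-$\ge 2$ caveat you flag is real, but the paper glosses over it too, and it holds in all applications).

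The second assertion is where there is a genuine gap: the claimed ``elimination isomorphism'' $R[S_1,\ldots,S_n]/I \cong A[S_1,\ldots,S_n]$ is false. The relation $h_i = g_i - g_i'$ does not let you eliminate anything, because neither $g_i$ nor $g_i'$ is a variable: modding out by $g_i - g_i'$ identifies the class of $g_i$ with that of $g_i'$, it does not kill $g_i$. Concretely, take $r=3$, $s=1$, $n=1$, $g_1 = T_1^2+T_2^2+T_3^2$, $g_1' = S_1^2$, so $V(g_1)\subseteq\PP_2$ is a smooth conic. Then $R[S_1]/I$ is the coordinate ring of an irreducible quadric cone in $\KK^4$ with an isolated singular point, whereas $A[S_1]$ is the coordinate ring of (cone over a conic)$\times\mathbb{A}^1$, which is singular along a whole line; these rings are not isomorphic. (With $r=2$, $g_1=T_1^2+T_2^2$, $g_1'=S_1^2$ the discrepancy is starker: the left-hand side is a domain while $A$ is not even a domain --- though that case also violates the implicit codimension hypothesis.) So your argument does not establish primality of $I$; primality of $\<g_1,\ldots,g_s\>$ does not transfer through a map that does not exist. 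The paper's proof instead applies Serre's criterion \emph{directly} to $\b{X}=V(I)\subseteq\KK^{r+n}$: the Jacobian of $(h_1,\ldots,h_s)$ contains as its first $s\times r$ block the Jacobian of $(g_1,\ldots,g_s)$ in the $T$-variables, which has rank $s$ on $(\KK^r\setminus\{0\})\times\KK^n$ by smoothness of $V(g_1,\ldots,g_s)\subseteq\PP_{r-1}$; hence the locus where $\b{X}$ fails to be smooth of codimension $s$ lies in $\{T=0\}$, which is small in $\b{X}$, and connectedness (from the pointed grading) then yields irreducibility. You would need to replace your second step by this direct Jacobian argument on $V(I)$, or by some other correct device; as written, the proof of the second (and main) assertion does not go through.
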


\begin{proof}
We use Serre's criterion, 
see~\cite{Kra}.
Write $\b X := V(I)\subseteq \KK^{r+n}$
and let $J := (\partial h_i/\partial T_j)_{i,j}$
be the Jacobian matrix.
We show that the closed set
$$
A
\ :=\ 
\b X
\ \cap\ 
\{
z\in \KK^{r+n};\ \rank(J(z) )\ <\ s
\}
\ \subseteq \ \b X_2
$$
is of codimension at least two in~$\b X$.
Since the $g_i\in \KT{r}$ are homogeneous 
and $V(g_1,\ldots,g_s)\subseteq \PP_{r-1}$ is smooth
the singular locus of the affine cone satisfies
$V(\KK^{r};\,g_1,\ldots,g_s)^{\rm sing}\subseteq \{0\}$.
Therefore, 
the first $s\times r$ submatrix of 
\[
J\ =\ 
\left[
\mbox{\tiny $
\begin{array}{rrrrrr}
\frac{\partial g_1}{\partial T_1}
& 
\ldots
&
\frac{\partial g_1}{\partial T_{r}}
&
\ldots
\\
\vdots
&
&
\vdots
&
\ldots
\\
\frac{\partial g_2}{\partial T_1}
& 
\ldots
&
\frac{\partial g_2}{\partial T_r}
&
\ldots
\end{array}
$}
\right]
\]
and hence the matrix $J$
is of rank $s$ 
on $\KK^r\setminus\{0\}\times \KK^n$.
This shows that $A$ is small in $\b X$.
As the grading is pointed, $\b X$ is connected
and Serre's criterion delivers that $I$ is prime.
The same argument holds for $\<g_1,\ldots,g_s\>$.
\end{proof}

\begin{proof}[Proof of Theorem~\ref{thm:fano2}]
{\em Case $X_1$:\/}
Consider the variety $X$ 
listed in Theorem~\ref{thm:fano1}
for the case $(r,-K_X^3)=(2,8)$, i.e., we have
$X=V(f)\subseteq \PP(3,2,1,1,1)$
where $f\in \KT{5}$ is homogeneous of degree six. 
Then $X_1$ is the blow-up of $X$ 
along an elliptic curve $C\subseteq X$
that is the intersection of
two divisors of degree one. 
Without loss of generality,
applying a linear change of coordinates
yields
\begin{gather*}
f\ =\ 
T_1^2 + T_2^3 + T_2f_4 + f_6\qquad
\text{where}\quad 
f_d\,\in\, \KK[T_3,T_4,T_5],\qquad
\deg(f_d)\ =\ d,\\
C\ =\ V(T_3-T_5,\,T_4-T_5,f)\ \subseteq\ \PP(3,2,1,1,1).
\end{gather*}
We now apply the steps of Remark~\ref{rem:steps}
with $f_1 = T_3-T_5$, $f_2 = T_4-T_5$ and $d_1 = d_2 = 1$.
The ideal $I_2'$ then is generated by
\[
f,\qquad
T_6T_8 - T_3 + T_5,\qquad
T_7T_8 - T_4 + T_5
\quad 
\in
\quad
\KK[T_1,\ldots,T_{8}].
\]
We show that $I_2' = I_2': T_8^\infty$
by showing that $I_2'$ is prime.
Replacing $T_3$ with $T_6T_8 + T_5$ and $T_5$ with $-T_7T_8 + T_4$,
we may show equivalently that 
$$
g\ :=\ 
T_1^2
+ 
T_2^3 
+
T_2 g_4
+
g_6
,\qquad
g_i\ :=\ f_i(T_4 -T_7T_8 + T_6T_8, T_4, T_4 - T_7T_8)
$$
is a prime polynomial in the ring $R := \KK[T_1,T_2,T_4,T_6,T_7,T_8]$.
Suppose $g=ab$ with $a,b\in R$.
Consider the $\ZZ_{\geq 0}$-grading on $R$ given by  $\deg(T_1):=\deg(T_2):=0$ and $\deg(T_j):=1$
for $j\ne 1,2$.
Then $a_0b_0 = T_1^2+T_2^3$ allows us to assume
$a_0 = T_1^2 + T_2^3$ and $b_0=1$.
As $T_2g_4$ and $g_6$ are independent of $T_1$ and $T_2$,
we obtain $b=1$, i.e., $I_2'$ is prime.
Moreover, for $T^\nu := T_1\cdots T_5$ we have
$$
\dim(I_2' +\<T_{8}\>)
\ =\ 
4
\ >\ 
\dim(I_2' + \<T_{8},T^\nu\>)
\ =\ 
3
.
$$
By the steps of Remark~\ref{rem:steps},
$\KT{8}/I_2$ is the Cox ring of~$X_1$.
Performing the previous replacements of $T_3$ and $T_5$
we arrive at $\Cox(X_1) = R/\<g\>$.

{\em Cases $X_i$ with $i\in\{2,8,18\}$:\/}
All these cases are double covers with branch divisor $V(f)$
where $f$ is as shown in the list of the theorem.
The Cox rings are obtained using Lemma~\ref{lem:doublecover}:
$X_2$ and $X_{18}$ are double covers of $\PP_1\times \PP_2$
whereas case $X_{8}$ is a double cover of $Y$ where 
$Y\to \PP_3$ is the blow-up of a point, i.e.,
\[
\Cox(Y)\ =\ \KT{5},\qquad
\left[
\mbox{
\tiny $
\begin{array}{rrrrr}
1 & 1 & 1 & 1 & 0\\
-1 & -1 & -1 & 0 & 1
\end{array}
$
}
\right].
\]

{\em Case $X_3$:\/}
This is the blow-up of the variety 
$Y$ of Theorem~\ref{thm:fano1} with $(r,-K_X^3)=(2,16)$
along an elliptic curve 
that is the intersection of two divisors
$D_1,D_2\subseteq V(T_5^2 -f)$ 
of degree 
$-1/2\cdot w_{Y}^{\rm can} = 1$
where 
$-w_{Y}^{\rm can}\in \Cl(Y)=\ZZ$ 
is the anticanonical divisor class.
By a linear change of coordinates we 
achieve $D_1 = V(T_1)$ and $D_2 = V(T_2)$.
We now apply the steps listed in
Remark~\ref{rem:steps}
with $f_1 = T_1$, $f_2 = T_2$ and $d_1 = d_2 = 1$.
The ideal $I_2'$ then is generated by
\[
T_5^2 - f,\qquad
T_1 - T_6T_8,\qquad
T_2 - T_7T_8
\quad 
\in
\quad
\KK[T_1,\ldots,T_{8}].
\]
We show that $I_2' = I_2': T_8^\infty$
by showing that $I_2'$ is prime.
Equivalently, we may show that 
$$
T_5 ^2 - f'
\ \in\ 
\KK[T_3,\ldots,T_8],
\qquad 
f'\ :=\
f(T_6T_8, T_7T_8, T_3,T_4)
$$ 
is prime.
Otherwise, $f'$ must be a square.
In particular, $f(T_6,T_7,T_3,T_4)$ is a square.
This contradicts the choice of~$f$.
Moreover, for $T^\nu := T_3T_4T_5$ we have
$$
\dim(I_2' +\<T_{8}\>)
\ =\ 
4
\ >\ 
\dim(I_2' + \<T_{8},T^\nu\>)
\ =\ 
3
.
$$
By the steps of Remark~\ref{rem:steps},
$\KT{8}/I_2$ is the Cox ring of~$X_3$.
Substitution of
$T_1 = T_6T_8$ and $T_2 = T_7T_8$ into $T_5^2 - f_2$
delivers 
\[
\Cox(X_3)
\ =\ 
\KK[T_3,\ldots,T_8]/
\<T_5^2 - f_2(T_6T_8, T_7T_8, T_3,T_4)\>.
\]

{\em Cases $X_i$ with $i\in \{4,15,25\}$:\/}
We will exemplarily compute $\Cox(X_4)$;
the other two Cox rings can be computed analogously
using the polynomials $g_i$ listed in the table.
The case $X_4$ is the blow-up of $\PP_3$
along the smooth intersection of two cubics  $V(g_i)\subseteq \PP_3$.
Applying the steps listed in Remark~\ref{rem:steps}
delivers the ideal $I_2'$ generated by 
\[
g_1 - T_5T_7,\qquad
g_2 - T_6T_7
\ \in\ \KK[T_1,\ldots,T_7]
\]
where the $g_i$ are as shown in the theorem.
We have $I_2' = I_2':T_7^\infty$
 for $I_2'$ is prime by Lemma~\ref{lem:serre}.
By the steps of Remark~\ref{rem:steps},
$\KT{7}/I_2'$ is the Cox ring of~$X_4$:
setting $T^\nu := T_1\cdots T_4$, we have
$$
\dim(I_2' +\<T_{7}\>)
\ =\ 
4
\ >\ 
\dim(I_2' + \<T_{7},T^\nu\>)
\ =\ 
3
.
$$

{\em Case $X_5$:\/}
This is the blow-up of the variety $X$
listed in Theorem~\ref{thm:fano1}
for $(r,-K_X^2)=(2,24)$ along a plane cubic.
After a linear change of coordinates
this means we want to blow-up $X=V(f)\subseteq \PP_4$
in $C := V(T_4,T_5,f)\subseteq V(f)$
with a homogeneous polynomial $f\in\KT{5}$
of degree three such that $V(f)\subseteq \PP_4$ is smooth.
By the steps of Remark~\ref{rem:steps},
we obtain an ideal $I_2'$ generated by
\[
f,\qquad
T_4 - T_6T_8,\qquad
T_5 - T_7T_8
\quad 
\in
\
\KK[T_1,\ldots,T_{8}].
\]
As $V(f,T_4,T_5)\subseteq \PP_4$ is smooth
the ideal $I_2'$ is prime.
In particular $I_2' = I_2' : T_8^\infty$.
Note that the dimension test succeeds:
setting $T^\nu := T_1\cdots T_3$, we have
$$
\dim(I_2 +\<T_{8}\>)
\ =\ 
4
\ >\ 
\dim(I_2 + \<T_{8},T^\nu\>)
\ =\ 
3
.
$$
By the steps listed in Remark~\ref{rem:steps}, 
the Cox ring of $X_{5}$ is $\KT{8}/I_2'$.
Removal of redundant generators yields
$$
\Cox(X_5)\ =\ \KK[T_1,\ldots,T_3,T_6,\ldots,T_8]/\<g\>,\qquad
 g
 \ :=\ 
 f(T_1,\ldots,T_3,T_6T_8, T_7T_8).
$$

{\em Cases $X_i$ with $i\in \{6a, 24, 32\}$:\/}
This is a prime divisor on $\PP_2\times \PP_2$ of 
degree $\nu\in \ZZ^2$
where $\nu = (2,2)$, $(1,2)$ or $(1,1)$ in the respective cases, see~\cite[Table~12.3]{AG5}.
Let $f$ be a homogeneous polynomial 
of degree $\nu\in \ZZ^2$
such that $V(f)$ is smooth.
By~\cite[Corollary~4.1.1.3]{ArDeHaLa}
the Cox ring is
\[
\Cox(X_{i})\ =\ \KT{6}/\<f\>,
\qquad
\left[
\mbox{\tiny $
\begin{array}{rrrrrr}
1 & 1 & 1 & 0 & 0 & 0\\
0 & 0 & 0 & 1 & 1 & 1
\end{array}
$}
\right].
\]

{\em Case $X_{6b}$:\/}
This is a double cover of a smooth 
divisor $W$
on $\PP_2\times \PP_2$ of degree $(1,1)\in \ZZ^2$
with branch locus a divisor in $|-K_W|$.
By~\cite[Corollary~4.1.1.3]{ArDeHaLa},
the Cox ring and degree matrix of $W$ are
$$
\Cox(W)\ =\ \KT{6}/\<f\>,
\qquad
\left[
\mbox{\tiny $
\begin{array}{rrrrrr}
1 & 1 & 1 & 0 & 0 & 0\\
0 & 0 & 0 & 1 & 1 & 1
\end{array}
$}
\right]
$$
where $f$ is as in the table of the theorem.
Picking a homogeneous polynomial $g\in \KT{6}$ 
of degree $(2,2)\in \ZZ^2$,
by Lemma~\ref{lem:doublecover}, the Cox ring
and its degree matrix are 
\[
\Cox(X_{6b})
\ =\ 
\KT{7}/\<f, T_7^2 -g\>,
\qquad
\left[
\mbox{\tiny $
\begin{array}{rrrrrrr}
1 & 1 & 1 & 0 & 0 & 0 & 1\\
0 & 0 & 0 & 1 & 1 & 1 & 1
\end{array}
$}
\right].
\]

{\em Case $X_7$:\/}
Let $f,g_1,g_2\in \KT{5}$ be classically homogeneous
of degree $\deg(f)=\deg(g_i)=2$ 
such that $V(f,g_1,g_2)\subseteq \PP_4$ is smooth.
Then $X_7$ is  the blow-up of $V(f)$ along $V(g_1,g_2)\subseteq V(f)$.
By the steps listed in Remark~\ref{rem:steps},
we obtain an ideal $I_2'$ generated by
\[
f,\qquad
g_1 - T_6T_8,\qquad
g_2 - T_7T_8
\quad 
\in
\quad
\KK[T_1,\ldots,T_8].
\]
By Lemma~\ref{lem:serre}, $I_2'$ is prime.
In particular $I_2' = I_2':T_8^\infty$.
By the steps listed in Remark~\ref{rem:steps},
$\KT{8}/I_2'$ is the Cox ring of~$X_7$:
setting $T^\nu := T_1\cdots T_5$, we have
$$
\dim(I_2' +\<T_{8}\>)
\ =\ 
4
\ >\ 
\dim(I_2' + \<T_{8},T^\nu\>)
\ =\ 
3
.
$$

{\em Case $X_{9}$:\/}
This is the blow-up of $\PP_3$ along a curve $C\subseteq \PP_3$ 
of degree seven and of genus five such that 
$C$ is an intersection of cubics.
The Cox ring listed in the table has been computed
for $C\subseteq\PP_3$ with
$I(C)\subseteq \KT{4}$ generated by

\begingroup
\tiny
\begin{center}
$
\begin{array}{l}
    T_{1}^2T_{3} + T_{2}^2T_{3} + T_{3}^3 - T_{1}^2T_{4} + T_{2}^2T_{4} - 2T_{3}^2T_{4} + T_{3}T_{4}^2 - 3T_{4}^3,\\
    T_{2}^3 + 5/2T_{2}^2T_{3} - 1/2T_{2}T_{3}^2 + T_{3}^3 + 1/2T_{2}^2T_{4} - 5/2T_{3}^2T_{4} - T_{2}T_{4}^2  + 1/2T_{3}T_{4}^2 - 7/2T_{4}^3,\\
    T_{1}^2T_{2} - 5/2T_{2}^2T_{3} + 3/2T_{2}T_{3}^2 - T_{3}^3 + 3T_{1}^2T_{4} - 5/2T_{2}^2T_{4} + 7/2T_{3}^2T_{4}  + 2T_{2}T_{4}^2 - 1/2T_{3}T_{4}^2 + 11/2T_{4}^3.
\end{array}    
$
\end{center}
\endgroup

{\em Case $X_{10}$:\/}
This is the blow-up of $X\subseteq\PP_5$ 
from Theorem~\ref{thm:fano1}, case $(r,-K_X^3)=(2,32)$,
along an elliptic curve $C$ that 
is an intersection of two hyperplane sections;
after a linear change of coordinates,
we may choose $C:=V(T_5,T_6)\cap X$.
Applying the steps of 
Remark~\ref{rem:steps},
provides us with the ideal $I_2'$ generated by
\[
T_5-T_7T_{9},\quad 
T_6-T_8T_{9},\quad
f_1,\quad
f_2\quad
\in
\quad
\KK[T_1,\ldots,T_9]
\]
where the $f_i$ are as in Theorem~\ref{thm:fano1}.
Observe that $I_2'$ is prime
since 
$V(f_1,f_2,T_5,T_6)\subseteq \PP_5$ is smooth,
see Lemma~\ref{lem:serre}.
In particular, $I_2' = I_2':T_{9}^\infty$.
Note that the dimension test succeeds:
setting $T^\nu := T_1\cdots T_4$, we have
$$
\dim(I_2 +\<T_{9}\>)
\ =\ 
4
\ >\ 
\dim(I_2 + \<T_{9},T^\nu\>)
\ =\ 
3
.
$$
By the steps of Remark~\ref{rem:steps}, the Cox ring of $X_{10}$
is $\KT{9}/I_2$.
Removal of redundant generators yields
\begin{gather*}
\Cox(X_{10}) 
\ =\ 
\KK[T_1,\ldots,T_4,T_7,\ldots,T_{9}]/I_2,
\\
I_2\ :=\ 
\<
f_1(T_1,\ldots,T_4,T_7T_{9},T_8T_{9}),\ 
f_2(T_1,\ldots,T_4,T_7T_{9},T_8T_{9})
\>
.
\end{gather*}

{\em Case $X_{11}$:\/} 
This is the blow-up of the variety $X$ from  Theorem~\ref{thm:fano1}, case $(r,-K_X^3)=(2,24)$, 
along a line.
We may assume the following.
Let $g_1,g_2,g_3\in \KT{5}$ be classically
homogeneous with $\deg(g_{i})=2$
such that $V(g)\subseteq \PP_4$ is smooth
where $g := T_1g_1 + T_2g_2 + T_3g_3$.
Then $X_{11}$ is the blow-up of
$X=V(g)\subseteq \PP_4$ 
along the line $V(T_1,T_2,T_3)\subseteq X$.
The steps of Remark~\ref{rem:steps}
provide us with the ideal $I_2'$ generated by
\[
g,\qquad
T_1 - T_6T_9,\qquad
T_2 - T_7T_9,\qquad
T_3 - T_8T_9
\quad 
\in
\quad
\KK[T_1,\ldots,T_{9}].
\]
We now show that $I_2 = I_2':T_9^\infty$
where
\begin{gather*}
I_2 
\ :=\ 
\<
T_1 - T_6T_9,\ 
T_2 - T_7T_9,\ 
T_3 - T_8T_9,\ 
h
\>
\ \subseteq\ 
\KT{9}.
\\
h \ :=\ 
T_6g_1(T_6T_9, T_7T_9, T_8T_9, T_4, T_5)
+
T_7g_2(T_6T_9, T_7T_9, T_8T_9, T_4, T_5)
\\
\hphantom{h\ :=\ }
+
T_8g_3(T_6T_9, T_7T_9, T_8T_9, T_4, T_5).
\end{gather*}
Note that it suffices to show that $I_2$ is prime.
Equivalently, we may show that the last generator $h$
is a prime element in $\KK[T_4,\ldots,T_9]$.
This is the case since $V(h)$ is the strict transform of $V(g)$.
The dimension test is satisfied:
setting $T^\nu := T_4T_5$, we have
$$
\dim(I_2 +\<T_{9}\>)
\ =\ 
4
\ >\ 
\dim(I_2 + \<T_{9},T^\nu\>)
\ =\ 
3
.
$$
By the steps of Remark~\ref{rem:steps}, the Cox ring of $X_{11}$
is $\KT{9}/I_2$.
Removal of redundant generators yields
$$
\Cox(X_{11})
\ =\ 
\KK[T_4,\ldots,T_9]/ \<h\>.
$$

{\em Case $X_{12}$:\/}
This is the blow-up of $\PP_3$ along a curve $C$
of degree six and of genus three such that $C$
is an intersection of cubics.
We have chosen the generators $f_1,\ldots,f_4$ for the 
ideal $I(C)\subseteq \KT{4}$ as
\begingroup
\footnotesize
\begin{gather*}
T_{1}^3 - T_{1}T_{2}T_{3} + T_{1}T_{2}T_{4} + T_{3}^2T_{4},\qquad
T_{1}^2T_{2} - T_{2}^2T_{3} + T_{1}T_{3}T_{4} + T_{3}T_{4}^2,\\
T_{1}T_{2}^2 - T_{1}^2T_{3} + T_{2}T_{3}^2 - T_{1}T_{3}T_{4}, \qquad
T_{2}^3 - T_{1}^2T_{4} + T_{3}^2T_{4} - T_{2}T_{4}^2.
\end{gather*}
\endgroup

Note that in order to compute the listed Cox ring
one has to add an additional generator of $I^3:J^\infty$.
Here, the input for Remark~\ref{rem:steps}
is $f_1,\ldots,f_4$ with multiplicities $d_i = 1$ and the polynomial $f_5$ with $d_5 = 3$ given by
\begin{center}
$
\begin{array}{l}
T_1f_5\ =\ 
 f_3^2f_4 - f_1f_3^2 - f_3f_2^2 - f_4f_1^2 +f_1^3
 \ \in\ \KT{4}.
\end{array}
$
\end{center}

{\em Case $X_{13}$:\/}
This is the blow-up of a smooth quadric
$Q\subseteq \PP_4$ along 
 a curve $C\subseteq Q$ 
of degree six and of genus two.
The Cox ring listed in the table has been computed
for 
$Q=V(T_{2}^2 + T_{3}T_{4} + T_{1}T_{5})\subseteq \PP_4$ 
and we have chosen the following generators for~$I(C)$:
\begingroup
\footnotesize
\begin{gather*}
  T_{2}T_{3} + T_{1}T_{4} + T_{2}T_{4},\qquad
  T_{1}T_{3} + T_{3}^2 + T_{1}T_{4} + T_{3}T_{4} - T_{4}T_{5},\\
  T_{1}^2 - T_{3}^2 - T_{1}T_{4} - T_{3}T_{4} + T_{2}T_{5} + T_{4}T_{5}.
\end{gather*}
\endgroup

{\em Case $X_{14}$:\/}
This is the blow-up of 
the variety $X$, case $(r,-K_X^3)=(2,40)$,
from Theorem~\ref{thm:fano1} along
 an elliptic curve $C\subseteq X$
 that is an intersection of two hyperplane sections.
 We have computed the Cox ring listed in the table for the choice of  
 $C\subseteq X$ with vanishing ideal $I(C)\subseteq \KT{7}$ given by
 \begingroup
 \footnotesize
 \begin{gather*}
T_{2} + T_{7},\qquad 
T_{1} + T_{3},\qquad
-T_{4}T_{5} + T_{3}T_{6} + T_{7}^2,\qquad
T_{3}T_{5} - T_{6}^2 + T_{4}T_{7},\\
T_{4}^2 + T_{5}T_{6} + T_{3}T_{7},\qquad
T_{3}T_{4} + T_{5}^2 + T_{6}T_{7},\qquad
T_{3}^2 - T_{4}T_{6} - T_{5}T_{7}.
\end{gather*}
\endgroup

{\em Case $X_{16}$:\/}
This is the blow-up of 
the variety $X\subseteq \PP_5$ 
listed in Theorem~\ref{thm:fano1}, case $(r,-K_X^3)=(2,32)$, 
along a conic $C\subseteq X$;
we may assume that $C =V(T_1,T_2,T_3,T_5^2-T_4T_6)\subseteq \PP_5$.
The steps of Remark~\ref{rem:steps}
deliver the ideal $I_2'\subseteq \KT{11}$ generated by 
\[
f_1,\quad
f_2,\quad
T_1 - T_7T_{11},\quad
T_2 - T_8T_{11},\quad
T_3 - T_9T_{11},\quad
T_5^2-T_4T_6 - T_{10}T_{11}
\]
where the $f_i\in \KT{6}$ homogeneous polynomials of 
degree two, both
$V(f_i)\subseteq \PP_5$ are smooth
and $Y:=V(f_1,f_2)\subseteq \PP_5$ is smooth and of dimension three,
see Theorem~\ref{thm:fano1}.
Note that there is a relation $T_{10} + g\in I_2':T_{11}^\infty$
with $g\in \KK[T_1,\ldots,T_9,T_{11}]$:
without loss of generality, 
we may assume $f_1 = T_1^2 + T_2^2 + T_3^2 + T_5^2 - T_4T_6$
i.e.,
all monomials different from $T_5^2$ and $T_4T_6$ 
depend on $T_1,T_2$ or $T_3$.
Substituting the other equations, we may cancel 
the factor $T_{11}$; this yields an equation 
that is linear in  $T_{10}$.
We now show that 
$I_2''\subseteq \KT{11}$ is generated by
\[
f_1,\quad
f_2,\quad
T_1 - T_7T_{11},\quad
T_2 - T_8T_{11},\quad
T_3 - T_9T_{11},\quad
T_{10} +g
\]
equals $I_2':(T_1\cdots T_{11})^\infty$ since it is prime.
Note that this is equivalent to the ideal 
$$
I_2'''\ :=\ 
\<
f_1,\
f_2,\
T_1 - T_7T_{11},\
T_2 - T_8T_{11},\
T_3 - T_9T_{11}
\>\ \subseteq\ \KK[T_1,\ldots,T_{9},T_{11}]
$$
being prime;
this follows from Lemma~\ref{lem:serre}.
Eliminating the linear equations, we obtain
\begin{gather*}
 \Cox(X_{16})
\ =\ 
\KK[T_4,\ldots,T_9,T_{11}]/I_2,\\
I_2
\ :=\ 
\<
f_1(T_7T_{11},T_8T_{11},T_9T_{11},T_4,T_5,T_6),\
f_2(T_7T_{11},T_8T_{11},T_9T_{11},T_4,T_5,T_6)
\>.
\end{gather*}

{\em Case $X_{17}$:\/}
This is the blow-up of a smooth quadric $Q\subseteq \PP_4$
along an elliptic curve $C\subseteq V(g)$ of degree five.
To compute the Cox ring,
we have chosen $Q$ as $V(T_{2}^2 + T_{3}T_{4} + T_{1}T_{5})\subseteq \PP_4$
and the subvariety $C\subseteq Q$ with the 
vanishing ideal $I(C)\subseteq \KT{5}$ generated by
\begingroup
\footnotesize
\begin{gather*}
    T_{4}^2 - T_{3}T_{5} + T_{4}T_{5}, \qquad
    T_{2}T_{4} + T_{1}T_{5} + T_{2}T_{5},\qquad
    T_{1}T_{2} - T_{3}^2 - T_{1}T_{4} - T_{1}T_{5}, \\
    T_{2}^2 + T_{3}T_{4} + T_{1}T_{5},\qquad
    T_{2}T_{3} + T_{1}T_{4}.
\end{gather*}
\endgroup
Note that in order to compute the listed Cox ring
one has to add the following additional generator of $I^3:J^\infty$
to the generating set 
 used in the steps listed in Remark~\ref{rem:steps}:
\begingroup
\tiny
\begin{center}
$
\begin{array}{l}
T_{1}T_{2}T_{3}T_{4}^2 + T_{2}T_{3}^2T_{4}^2 - 1/2T_{3}^3T_{4}^2 + 1/2T_{1}^2T_{4}^3 + 
        1/2T_{1}T_{2}T_{4}^3 + T_{1}T_{3}T_{4}^3 + 1/2T_{2}T_{3}T_{4}^3 + 1/2T_{1}T_{4}^4 \\
        - 
        T_{1}T_{2}T_{3}^2T_{5} - 1/2T_{2}T_{3}^3T_{5} + 1/2T_{3}^4T_{5} - 
        3/2T_{1}^2T_{2}T_{4}T_{5} - 1/2T_{1}^2T_{3}T_{4}T_{5} + T_{1}T_{2}T_{3}T_{4}T_{5}
        + 
        1/2T_{1}T_{3}^2T_{4}T_{5}\\
        + T_{2}T_{3}^2T_{4}T_{5} - 1/2T_{3}^3T_{4}T_{5}  + 
        2T_{1}^2T_{4}^2T_{5} + 3/2T_{1}T_{2}T_{4}^2T_{5} + T_{1}T_{3}T_{4}^2T_{5} + 
        T_{2}T_{3}T_{4}^2T_{5} + T_{1}T_{4}^3T_{5} - 1/2T_{1}^3T_{5}^2
        \\
        - 3/2T_{1}^2T_{2}T_{5}^2
        + T_{1}T_{3}^2T_{5}^2 + 3T_{1}^2T_{4}T_{5}^2 + 3/2T_{1}T_{2}T_{4}T_{5}^2 + 
        1/2T_{2}T_{3}T_{4}T_{5}^2 + 1/2T_{1}T_{4}^2T_{5}^2 + 3/2T_{1}^2T_{5}^3 + 
        1/2T_{1}T_{2}T_{5}^3.
\end{array}
$
\end{center}
\endgroup

{\em Case $X_{19}$:\/}
This is the blow-up of 
the variety $X=V(f_1,f_2)\subseteq \PP_5$ 
from Theorem~\ref{thm:fano1},
case $(r,-K_X^3)=(2,32)$,
along a line $C\subseteq X$.
The shown Cox ring has been computed for the
choices
\begingroup
\footnotesize
\begin{eqnarray*}
f_1
&:=& 
T_{1}^2 + T_{2}^2 + T_{3}^2 + T_{4}^2 + T_{1}T_{5} + T_{6}T_{4} + T_{3}T_{6}\quad \in\ \KT{6}, 
\\
f_2
&:=&
T_{1}T_{2} + T_{2}T_{3} - T_{3}T_{4} +T_{4}T_{5} + T_{1}T_{6}
\quad \in\ \KT{6}
\end{eqnarray*}
\endgroup
and we took as generators for the vanishing ideal $I(C)\subseteq \KT{6}$
 the following polynomials
\begingroup
\footnotesize
\begin{gather*}
  T_4,\quad
  T_3,\quad
  T_2,\quad
  T_1,\quad
  T_2^2 + T_3^2 + T_3T_4 + 2T_4^2 + T_1T_5 + T_3T_6 + T_4T_6.
\end{gather*}
\endgroup
Moreover, all multiplicities $d_i$ are $1$ except for the last one, 
which is~$3$.
See Example~\ref{ex:X19} for an explanation.

{\em Case $X_{20}$:\/}
This is the blow-up of 
the variety $X\subseteq \PP_6$ from Theorem~\ref{thm:fano1},
case $(r,-K_X^3)=(2,40)$, 
along a twisted cubic $C\subseteq X$.
The shown Cox ring has been computed for the following generators of the
vanishing ideal $I(C)\subseteq \KT{7}$:
\begingroup
\footnotesize
\begin{gather*}
    T_{5} - T_{6} + T_{7}, \qquad
    T_{2} - T_{4} - T_{6} + T_{7},\qquad
    T_{1} - T_{4} - T_{6} + T_{7}, \qquad
    T_{6}^2 - T_{3}T_{7} + T_{4}T_{7} - T_{6}T_{7},\\
    T_{4}T_{6} - 2T_{4}T_{7} - T_{6}T_{7} + T_{7}^2,\qquad
    T_{3}T_{4} - T_{4}^2 - T_{3}T_{7} - T_{4}T_{7} - T_{6}T_{7} + T_{7}^2.
\end{gather*}
\endgroup

{\em Case $X_{21}$:\/}
This is the blow-up of of a smooth quadric
$V(g)\subseteq \PP_4$
along a twisted quartic $C\subseteq V(g)$.
The shown Cox ring has been computed for the
choices
\begingroup
\footnotesize
\begin{eqnarray*}
g
&:=& 
    T_{2}^2 - T_{1}T_{3} + T_{2}T_{3} + T_{3}^2 - T_{1}T_{4} + T_{2}T_{4} - T_{3}T_{4} - T_{4}^2 - 2T_{1}T_{5} + T_{2}T_{5} + T_{3}T_{5},
\\
C
&:=& 
V(
   T_{2}^2 - T_{1}T_{3},\ 
    T_{2}T_{3} - T_{1}T_{4},
T_{3}^2 - T_{1}T_{5},\ 
    T_{2}T_{4} - T_{1}T_{5},
    T_{3}T_{4} - T_{2}T_{5},\ 
    T_{4}^2 - T_{3}T_{5}
).
\end{eqnarray*}
\endgroup
Note that in order to compute the listed Cox ring
one has to use the following additional generator of $I^2:J^\infty$
with multiplicity two in Remark~\ref{rem:steps}
\begingroup
\tiny
\begin{center}
$
\begin{array}{l}
T_{3}^3 - 2T_{2}T_{3}T_{4} + T_{1}T_{4}^2 - T_{2}T_{3}T_{5} - T_{3}^2T_{5} + T_{1}T_{4}T_{5} - T_{2}T_{4}T_{5} + T_{3}T_{4}T_{5} + T_{4}^2T_{5} + 2T_{1}T_{5}^2 - T_{2}T_{5}^2 - T_{3}T_{5}^2.
\end{array}
$
\end{center}
\endgroup

{\em Case $X_{22}$:\/}
According to~\cite[p.~117]{MoMu},
$X_{22}$ can be obtained as the blow-up 
of $\PP_3$ along a rational quartic curve $C\subseteq \PP_3$.
The shown Cox ring has been computed for the following generators of the
vanishing ideal $I(C)\subseteq \KT{4}$:
\begingroup
\footnotesize
\begin{gather*}
T_{2}^3 - T_{1}^2T_{3}, \qquad
    T_{1}T_{3}^2 - T_{2}^2T_{4},\qquad
    T_{3}^3 - T_{2}T_{4}^2,\qquad
    T_{2}T_{3} - T_{1}T_{4}.
\end{gather*}
\endgroup

{\em Case $X_{23}$:\/}
Consider a smooth quadric $Q:=V(g)\subseteq\PP_4$.
Then $X_{23}$ is the blow-up of $Q$
along the intersection $C = V(g_1,g_2,g)$
where $g_i\in \KT{5}$ are  homogeneous
of degrees $\deg(g_1)=1$ and $\deg(g_2)=2$ such that
$C$ is smooth.
By the steps of Remark~\ref{rem:steps}
we obtain an ideal $I_2'$ generated by 
\[
g,\qquad
g_1 - T_6T_8,\qquad
g_2 - T_7T_8\quad
\in\ \KK[T_1,\ldots,T_{8}].
\]
Using Lemma~\ref{lem:serre},
the ideal $I_2'$ is prime, i.e.,
$I_2' = I_2' : T_8^\infty$.
According to the steps listed in Remark~\ref{rem:steps}, 
$\KT{8}/I_2'$ is the Cox ring of~$X_i$:
setting $T^\nu := T_1\cdots T_5$, we have
$$
\dim(I_2' +\<T_{8}\>)
\ =\ 
4
\ >\ 
\dim(I_2' + \<T_{8},T^\nu\>)
\ =\ 
3
.
$$
By assumption, there is a linear relation $T_1 + g_1' - T_6T_8$;
hence, we may remove the redundant generator~$T_1$.

{\em Case $X_{26}$:\/}
Consider the variety $X$
found in Theorem~\ref{thm:fano1} for 
$(r,-K_X^3)=(2,40)$
with the linear relations 
\begingroup
\footnotesize
\[
 f_1\ =\ T_5 - T_7 + T_{10},\qquad
 f_2\ =\ T_3 - T_4 + T_9,\qquad
 f_3\ =\ T_1 - T_4 + T_8.
\]
\endgroup
After elimination of the variables
$T_8$, $T_9$, $T_{10}$, we may assume
$X\subseteq \PP_6$.
The variety $X_{26}$ then is the blow-up of 
$X$ along a line $C\subseteq X$.
We have computed the listed Cox ring for the following 
choice of generators of~$I(C)\subseteq \KT{7}$:
\begingroup
\footnotesize
\begin{gather*}
T_{6} - T_{7},\quad  
T_{5} - T_{7},\quad 
T_{3} - T_{4},\quad 
T_{2} - T_{4},\quad  
T_{1} - T_{4}.
\end{gather*}
\endgroup
\noindent
Note that in order to compute the listed Cox ring
one has to use the additional element 
$T_{1} - T_{3} + T_{5} - T_{7}$
of $I^2:J^\infty$
with multiplicity two in 
Remark~\ref{rem:steps}.

{\em Case $X_{27}$:\/}
This is the blow-up of $\PP_3$ along a twisted cubic 
$C\subseteq \PP_3$.
Since all rational normal curves of $\PP_3$ are
projectively equivalent, it suffices to compute 
the Cox ring for $C\subseteq\PP_3$ with
\begin{eqnarray*}
    I(C)
    &=&\<
  -T_{3}^2 + T_{2}T_{4},
  T_{2}T_{3} - T_{1}T_{4},
  T_{2}^2 - T_{1}T_{3}
    \>\ \subseteq\ \KT{4}.
\end{eqnarray*}

{\em Cases $X_i$ with $i\in \{28,30\}$:\/}
We exemplarily treat the case $i=28$; $i=30$ is analogous.
Then $X_{28}$ is the blow-up of $\PP_3$ with center
a plane cubic $C\subseteq \PP_3$.
By a linear coordinate transformation
we achieve
$C = V(T_4,f)\subseteq \PP_3$
where $f\in \KT{3}$ is classically homogeneous
of degree $\deg(f)=3$.
Applying the steps of Remark~\ref{rem:steps}
yields the ideal $I_2'$ generated by 
\[
T_4 - T_5T_7,\qquad
f - T_6T_7
\ \ \in\ \ \KK[T_1,\ldots,T_7].
\]
Lemma~\ref{lem:serre} shows that
$I_2'$ is prime.
By the steps of Remark~\ref{rem:steps},
$\KT{7}/I_2'$ is the Cox ring of~$X_i$:
setting $T^\nu := T_1T_2T_3$, we have
$$
\dim(I_2' +\<T_{7}\>)
\ =\ 
4
\ >\ 
\dim(I_2' + \<T_{7},T^\nu\>)
\ =\ 
3
.
$$
The listed ring is obtained by removing the redundant
generator~$T_4$, i.e., we substitute $T_4 = T_5T_7$
and relabel the variables.

{\em Case $X_{29}$:\/}
This is the blow-up of a smooth quadric
$V(g)\subseteq \PP_4$ with center
a conic $C\subseteq V(g)$.
We may assume
that $g = h + g'\in \KT{5}$ 
where $g'\in \<T_4,T_5\>$ and
$h\in \<T_1,T_2,T_3\>$ such that
$V(h,T_4,T_5)\subseteq \PP_4$ is a smooth conic.
By the steps listed in Remark~\ref{rem:steps},
we obtain an ideal 
\begin{eqnarray}
I_2'
&=&
\<
h+g',\ 
T_4 - T_6T_9,\ 
T_5 - T_7T_9,\
h - T_8T_9
\>\notag
\\
&=&
\<
T_9(T_8 + T_9^{-1}g'(T_6T_9,T_7T_9)),\ 
T_4 - T_6T_9,\ 
T_5 - T_7T_9,\
h - T_8T_9
\>
\label{eq:X29}.
\\
&\subseteq& \KK[T_1,\ldots,T_{9}].\notag
\end{eqnarray}
Let $I_2\subseteq \KT{9}$ be the ideal
obtained from $I_2'$ by deleting $T_9$-factors
of the generators shown in~\eqref{eq:X29}, i.e.,
\begin{eqnarray*}
I_2
&=&
\<
T_8 + T_9^{-1}g'(T_6T_9,T_7T_9),\ 
T_4 - T_6T_9,\ 
T_5 - T_7T_9,\
h + g'(T_6T_9,T_7T_9)
\>
\\
&\subseteq&
\KT{9}.
\end{eqnarray*}
Then $I_2 = I_2':T_9^\infty$ since,
by Lemma~\ref{lem:serre},
 $I_2$ is a prime ideal.
By the steps listed in Remark~\ref{rem:steps},
$\KT{9}/I_2$ is the Cox ring of~$X_{29}$:
setting $T^\nu := T_1T_2T_3$, we have
$$
\dim(I_2 +\<T_{9}\>)
\ =\ 
4
\ >\ 
\dim(I_2 + \<T_{9},T^\nu\>)
\ =\ 
3
.
$$
The listed Cox ring $\Cox(X_{29})$
is obtained from  $\KT{9}/I_2$
by removing the generators
$T_4$, $T_5$ and~$T_8$.

{\em Case $X_{31}$:\/}
This is the blow-up of 
a smooth quadric $V(g)\subseteq\PP_4$ 
along a line $C\subseteq V(g)$; we may
assume $g\in \<T_1,T_2,T_3\>$ and 
choose $C=V(T_1,T_2,T_3)\subseteq V(g)$.
By the steps listed in Remark~\ref{rem:steps},
we obtain an ideal $I_2'$ generated by 
\[
g,\qquad
T_1 - T_6T_9,\qquad
T_2 - T_7T_9,\qquad
T_3 - T_8T_9\quad
 \in\ \KK[T_1,\ldots,T_{9}].
\]
The ideal $I_2'$ is prime, i.e.,
$I_2' = I_2':T_9^\infty$, see Lemma~\ref{lem:serre}.
By the steps listed in Remark~\ref{rem:steps},
$\KT{9}/I_2$ is the Cox ring of~$X_{31}$:
setting $T^\nu := T_4T_5$, we have
$$
\dim(I_2 +\<T_{9}\>)
\ =\ 
4
\ >\ 
\dim(I_2 + \<T_{9},T^\nu\>)
\ =\ 
3
.
$$
The listed ring is obtained by removing the 
redundant generators, i.e.,
$T_1 = T_6T_9$,
$T_2 = T_7T_9$
and $T_3 = T_8T_9$ are substituted into $g$
and the remaining variables are being relabeled.
\end{proof}

As an example, we show how to compute one of the $\dagger$-cases
of Theorem~\ref{thm:fano2} using our library \tt{compcox.lib}
as presented in Section~\ref{sec:example}.

\begin{example}[$X_{19}$ in Theorem~\ref{thm:fano2} and \tt{compcox.lib}]
\label{ex:X19}
Let $X$ be a smooth complete intersection 
of two quadrics $Q_i=V(g_i)\subseteq\mathbb P^5$.
Then, counted with multiplicity,
through any point $p\in X$
there are exactly four lines of $X$. 
Indeed, the union of these
lines is $Q_1\cap Q_2\cap T_pQ_1\cap T_pQ_2$.
The Fano threefold $X_{19}$ of Theorem~\ref{thm:fano2} 
is the blow-up $\pi\colon X_{19}\to X$
along one of these lines~$C$.
As seen in Theorem~\ref{thm:fano1}, we have
\[
R_1\ :=\ 
\mathcal R(X)\ =\ 
 \mathbb K[T_1,\dots,T_6]/\langle g_1,\,g_2\rangle,
\]
where all the generators have degree one.
To compute $\Cox(X_{19})$ with our library \tt{compcox.lib}, 
we choose 
\begingroup
\footnotesize
\begin{gather*}
g_1
\ =\  
T_{1}^2 + T_{2}^2 + T_{3}^2 + T_{4}^2 + T_{1}T_{5} + T_{6}T_{4} + T_{3}T_{6}\quad \in\ \KT{6}, 
\\
g_2
\ =\ 
T_{1}T_{2} + T_{2}T_{3} - T_{3}T_{4} +T_{4}T_{5} + T_{1}T_{6}
\quad \in\ \KT{6}.
\end{gather*}
\endgroup
Define the CEMDS $X$ encoded by $(P,\Sigma,G)$
similar to Example~\ref{ex:createCEMDS0} (we use 
an empty fan since we are only interested in the Cox ring, 
compare Remark~\ref{rem:nofan}):
\\[1ex]
\begingroup
\footnotesize
\tt{> intmat P[5][6] = }\\
\tt{> -1,  1,  0,  0,  0,  0,}\\
\tt{> -1,  0,  1,  0,  0,  0,}\\
\tt{> -1,  0,  0,  1,  0,  0,}\\
\tt{> -1,  0,  0,  0,  1,  0,}\\
\tt{> -1,  0,  0,  0,  0,  1;}\\
\tt{> fan Sigma = emptyFan(5);}\\
\tt{> ring S = 0,T(1..6),dp;}\\
\tt{> ideal G = }\\
\tt{> T(1)\textasciicircum 2 + T(2)\textasciicircum 2 + T(3)\textasciicircum 2 + T(4)\textasciicircum 2 + T(1)*T(5) + T(6)*T(4) + T(3)*T(6),}\\
\tt{> T(1)*T(2) + T(2)*T(3) - T(3)*T(4) +T(4)*T(5) + T(1)*T(6);}\\
\tt{> CEMDS X = createCEMDS(P, Sigma, G);}\\[1ex]
\endgroup
We choose the line $C\subseteq X$
by specifying the following generators
 $f_1,\ldots,f_4$ for the vanishing ideal 
 $I\subseteq R_1$ of~$p^{-1}(C)$:
\begingroup
\footnotesize
\begin{gather*}
  f_1\,:=\,T_4,\qquad
  f_2\,:=\,T_3,\qquad
  f_3\,:=\,T_2,\qquad
  f_4\,:=\,T_1.
%   -T_2^2 - T_3^2 - T_3T_4 - 2T_4^2 - T_1T_5 - T_3T_6 - T_4T_6.
\end{gather*}
\endgroup
We systematically produce generators 
for the Cox ring of the blow-up along $C$
by taking generators from the positive $\ZZ$-degrees 
$d_i$ of the saturated Rees algebra $R_1[I]^{\rm sat}$, 
as proposed in~\cite[Algorithm~5.6]{HaKeLa}.
Denote by $J:=\<T_1,\ldots,T_6\>\subseteq \KT{6}$
the irrelevant ideal.
Clearly,
the Rees component $(I:J^\infty)t^{-1}=It^{-1}$ of $\ZZ$-degree one
is generated by $f_1,\ldots,f_4$.
Set $d_1=\ldots=d_4=1$.
Observe that there are no new generators in
$(I^2:J^\infty)t^{-2}$, i.e., in $\ZZ$-degree two:\\[1ex]
\begingroup
\footnotesize
\tt{> ideal J = T(1),T(2),T(3),T(4),T(5),T(6); // the irrelevant ideal}\\
\tt{> ideal I1 = T(4), T(3), T(2), T(1); // the Rees component of degree one}\\
\tt{> ideal I2 = sat(I1\textasciicircum 2 + G, J)[1]; // the Rees component of degree two}\\
\tt{> std(reduce(I2, std(I1\textasciicircum 2 + G)));}\\
\tt{0}\\[1ex]
\endgroup
However, there is a generator in $(I^3:J^\infty)t^{-3}$
which is not generated by elements of smaller $\ZZ$-degree:\\[1ex]
\begingroup
\footnotesize
 \tt{> ideal I3 = sat(I1\textasciicircum 3 + G, J)[1];}\\
\tt{> std(reduce(I2, std(I1\textasciicircum 2 + G)));}\\
\tt{T(2)\textasciicircum 2 + T(3)\textasciicircum 2 + T(3)*T(4) + 2*T(4)\textasciicircum 2 + T(1)*T(5) + T(3)*T(6) + T(4)*T(6)}\\[1ex]
\endgroup
Define $f_5$ % :=  T_2^2 + T_3^2 + T_3T_4 + 2T_4^2 + T_1T_5 + T_3T_6 + T_4T_6$
as this polynomial and set $d_5 := 3$.
Starting our algorithm with\\[1ex]
\begingroup 
\footnotesize
\tt{> list L =}\\
\tt{T(4), T(3), T(2), T(1),}\\
\tt{T(2)\textasciicircum 2 + T(3)\textasciicircum 2 + T(3)*T(4) + 2*T(4)\textasciicircum 2 + T(1)*T(5) + T(3)*T(6) + T(4)*T(6);}\\
\tt{> intvec d = 1,1,1,1,3;}\\
\tt{> CEMDS X2 = blowupCEMDS(X1, L, d, 0);}
\begin{center}
\tiny
CEMDS verification successful.
\end{center}
\endgroup
\noindent
shows that $R_1[I]^{\rm sat}$ is generated in
$\ZZ$-degrees one and three, i.e., this yields 
the Cox ring of $X_{19}$. 
It is as shown in Theorem~\ref{thm:fano2}.
Note that the quadric 
of $\mathbb P^5$ defined by $f_5$
is singular with vertex
$C$ and it cuts out on $X$ the union of all the lines 
of $X$ which have non-empty intersection with $C$. 
\end{example}

\begin{remark}
The methods used for the proof of Theorem~\ref{thm:fano2} 
should expand directly to higher Picard rank. 
\end{remark}

\end{document}